\newcommand{\rrVert}{\Vert}
\newcommand{\llVert}{\Vert}
\newtheorem{lemma}{Lemma}[section]
\newtheorem{cor}{Corollary}[section]
\newtheorem{prop}{Proposition}[section]
\newcommand{\eqref}[1]{(\ref{#1})}
\begin{document}
\begin{frontmatter}

\title{Optimal exponential bounds for aggregation of density estimators}
\runtitle{Optimal exponential bounds for aggregation of density estimators}

\begin{aug}
\author[A,B]{\inits{P.C.}\fnms{Pierre C.}~\snm{Bellec}\corref{}\thanksref{A,B}\ead
[label=e1]{pierre.bellec@ensae.fr}}
\address[A]{CREST-ENSAE, 3 avenue Pierre Larousse, 92245
Malakoff Cedex, France.}
\address[B]{CMAP, Ecole Polytechnique, Route de Saclay, 91120
Palaiseau, France.\\ \printead{e1}}
\end{aug}

%
\received{\smonth{1} \syear{2015}}
%
\revised{\smonth{4} \syear{2015}}

\begin{abstract}
We consider the problem of model selection type aggregation
in the context of density estimation.
We first show that empirical risk minimization is sub-optimal
for this problem and it shares this property with
the exponential weights aggregate,
empirical risk minimization over the convex hull of the dictionary functions,
and all selectors.
Using a penalty inspired by recent works on the
$Q$-aggregation procedure, we derive a sharp oracle inequality
in deviation
under a simple boundedness
assumption and we show that the rate is optimal in a minimax sense.
Unlike the procedures based on exponential weights,
this estimator is fully adaptive under the uniform prior.
In particular, its construction
does not rely on the sup-norm of the unknown density.
By providing lower bounds with exponential tails,
we show that the deviation term appearing in the sharp oracle
inequalities cannot be improved.
\end{abstract}

\begin{keyword}
\kwd{aggregation}
\kwd{concentration inequality}
\kwd{density estimation}
\kwd{minimax lower bounds}
\kwd{minimax optimality}
\kwd{model selection}
\kwd{sharp oracle inequality}
\end{keyword}
\end{frontmatter}

\section{Introduction}\label{introduction}
We study the problem of estimation of
an unknown density from observations.
Let $(\mathcal{X},\mu)$ be a measurable space.
We are interested in estimating an unknown density $f$ with respect to
the measure $\mu$ given $n$ independent observations $X_1,\ldots,X_n$ drawn
from $f$.
We measure the quality of estimation of $f$ by
the $L^2$ squared distance
\begin{equation}
\label{eqL2distance}
\llVert \hat g - f \rrVert ^2 = \int(f-\hat
g)^2\,d\mu= \llVert \hat g \rrVert ^2 - 2 \int \hat gf\, d
\mu+ \llVert f \rrVert ^2,
\end{equation}
for any $\hat g\in L^2(\mu)$
possibly dependent on the data $X_1,\ldots,X_n$.
Since the term $\llVert  f \rrVert ^2$ is constant for all
$\hat g$, we will
consider throughout the paper
the risk
\begin{equation}
\label{eqlossR}
R(\hat g) = \llVert \hat g \rrVert ^2 - 2 \int\hat g f
\,d\mu.
\end{equation}
An estimator $\hat g$ minimizes $R(\cdot)$ if and only if it minimizes
(\ref{eqL2distance}).

Given $M$ functions $f_1,\ldots,f_M\in L^2(\mu)$,
we would like to construct a measurable function $\hat g$
of the observations $X_1,\ldots,X_n$ that is almost as good
as the best function among $f_1,\ldots,f_M$.
The model may be misspecified, which means that $f$ may not be
one of the functions $f_1,\ldots,f_M$.
We are interested in
deriving oracle inequalities, either in expectation
\begin{eqnarray*}
&&\mathbb{E}R(\hat g) \le C \min_{j=1,\ldots,M} R(f_j) +
\delta_{n,M},
\end{eqnarray*}
or with high probability, that is, for all $\varepsilon> 0$, with
probability greater than $1-\varepsilon$
\begin{eqnarray*}
&& R(\hat g) \le C \min_{j=1,\ldots,M} R(f_j) +
\delta_{n,M} + d_{n,M}(\varepsilon),
\end{eqnarray*}
where $\delta_{n,M}$ is a small quantity and $d_{n,M}(\cdot)$ is a
function of $\varepsilon$
that we call the deviation term.
We are only interested in sharp oracle inequalities,
that is, oracle inequalities where the leading constant is $C=1$,
since it is essential to derive minimax optimality results.

We consider only deterministic functions for $f_1,\ldots,f_M$.
They cannot depend on the data $X_1,\ldots,X_n$.
A standard application of this setting was introduced in
Wegkamp
\cite{wegkamp1999quasi}: given $m+n$ i.i.d. observations drawn from $f$,
use the first $m$ observations to build $M$ estimators $\hat f_1,\ldots, \hat f_M$,
and in a second step use the remaining $n$ observations
to select
the best among the preliminary estimators $\hat f_1,\ldots, \hat f_M$.
A related problem is selecting the best estimator from a family $\hat
f_1,\ldots, \hat f_M$ where these estimators are built using the same
data used for model selection or aggregation.
Such problems were recently considered in
Dalalyan and Salmon
\cite{dalalyan2012sharp}
and
Dai \textit{et al.}
\cite{dai2014aggregation} for the regression model with fixed design.

We are also interested in deriving
sharp oracle inequalities with prior weights
on the model $\{f_1,\ldots,f_M\}$.
To be more precise, for some prior probability
distribution $\pi_1,\ldots,\pi_M$ over the finite set
$\{f_1,\ldots,f_M\}$ and any $\varepsilon> 0$, our estimator
$\hat f_n$ should satisfy with probability greater than $1-\varepsilon$
\begin{equation}
\label{eqoracle-ineq}
R(\hat f_n) \le \min_{j=1,\ldots,M} \biggl(
R(f_j) + \frac{\beta}{n} \log\frac{1}{\pi_j} \biggr) +
d_{n,M}(\varepsilon),
\end{equation}
for some positive constant $\beta$ and some deviation term
$d_{n,M}(\cdot)$.
The Mirror Averaging algorithm
\cite{juditsky2008learning,dalalyan2012mirror} is known
to achieve a similar oracle inequality in expectation.
The analysis of Juditsky et~al. \cite{juditsky2008learning}
shows that the constant $\beta$ scales linearly with the sup-norm
of the unknown density,
which is also the case for the results presented here.
Model selection techniques with prior weights
were used in order to derive
sparsity oracle inequalities
using sparsity pattern aggregation
\cite{rigollet2011exponential,rigollet2012sparse,dalalyan2012mirror}.

Another related learning problem is that
of model selection when the model is
finite dimensional with a specific shape,
for example a linear span of $M$ functions
or the convex hull of $M$ functions.
This is the aggregation framework
and it has received a lot of attention in the last decade
to construct adaptive estimators that achieve the minimax optimal rates,
especially for the regression problem
\cite
{tsybakov2003optimal,lounici2007generalized,rigollet2011exponential,lecue2013empirical,rigollet2012sparse}
but also for density estimation
\cite{yang2000mixing,lecue2006lower,rigollet2007linear}.

The main contribution of the present paper is the following.
\begin{itemize}
\item
We provide sharp oracle inequalities
and the corresponding tight lower bounds
for two procedures: empirical risk minimization
over the discrete set $\{f_1,\ldots,f_M\}$
and the penalized procedure \eqref{eqdef-that} with
the penalty \eqref{eqdef-pen}.
Here, tight means that neither the rate nor the deviation
term of the sharp oracle inequalities can be improved.
The sharp oracle inequalities are given in
Theorem~\ref{thmerm-oi} and Corollary~\ref{coruniform}
and the tight lower bounds are given in
Theorems~\ref{thmselectors}
and~\ref{thmlower-exp}.
These results lead to a definition
of minimax optimality in deviation, which is discussed in
Section~\ref{sminimax}.
\end{itemize}
While proving the above results,
we extend several aggregation results that are known
for the regression model to the density
estimation setting.
Let us relate these results of the present paper to the existing
literature on the regression model:
\begin{itemize}
\item
In Theorem~\ref{thmerm-oi}, we derive a sharp oracle inequality in deviation
for the empirical
risk minimizer over the discrete set $\{f_1,\ldots,f_M\}$.
This is new in the context of density estimation,
and an analogous result is known
for the regression model \cite{rigollet2012sparse}. 
\item
In Theorem~\ref{thmms-oi}, we derive a sharp oracle inequality in deviation
for penalized empirical risk minimization with the penalty (\ref{eqdef-pen}).
With the uniform prior,
this yields the correct rate $(\log M)/n$ of model selection type aggregation.
This penalty is inspired by recent works on the $Q$-aggregation
procedure \cite{lecue2014optimal,dai2012deviation}
where similar oracle inequalities in deviation were obtained for the
regression model.
The first sharp oracle inequalities
that achieve the correct rate of model selection type aggregation were obtained
in expectation for the regression model
in \cite{yang2000mixing,catoni2004statistical}.
\item
We extend several lower bounds known for the regression model
to the density estimation setting.
We show that any procedure that selects a dictionary function
cannot achieve a better rate than $\sqrt{(\log M) / n}$
and that the rate of model selection type aggregation is of order
$(\log M)/n$.
We also show that the exponential weights aggregate and the
empirical risk minimizer over the convex hull of the dictionary
functions cannot be optimal in deviation,
with an unavoidable error term of order $1/\sqrt{n}$.
Earlier results for the regression model can be found in
\cite{tsybakov2003optimal,rigollet2012sparse} for lower bounds on
model selection type aggregation and the performance of selectors,
while \cite{lecue2009aggregation,dai2012deviation,lecue2013optimality}
contain earlier lower bounds on the performance of exponential weights
and empirical risk minimization over the convex hull of the dictionary.
\end{itemize}

An aspect of our results is not present
in the previous works on the regression model.
In the literature on aggregation in the regression model, lower bounds
are proved either in expectation or in probability in the form
\begin{equation}
\label{eqexample-lower-proba}
\mathbb{P} \Bigl( R(\hat T_n) > \min
_{j=1,\ldots,M} R(f_j) + \psi_{n,M} \Bigr) > c,
\end{equation}
for any estimator $\hat T_n$, a risk function $R(\cdot)$, a rate $\psi_{n,M}$
and some absolute constant $c>0$, usually $c=1/2$.
The tight lower bounds presented in Theorems~\ref{thmselectors}
and~\ref{thmlower-exp} contrast with
lower bounds of the form \eqref{eqexample-lower-proba} as they yield
for any estimator $\hat T_n$,
\begin{equation}
\label{eqexample-lower-range} \forall x > 0, \qquad\mathbb{P} \biggl( R(\hat T_n) >
\min_{j=1,\ldots,M} R(f_j) + \psi_{n,M} +
\frac{x}{n} \biggr) > c \exp(-x),
\end{equation}
that is, they provide lower bounds for any probability estimate in an
interval $(0,1/c)$
where $c>0$ is an absolute constant.
Moreover, these lower bounds show that the exponential tail of  the
excess risk of the estimators from Theorems~\ref{thmerm-oi}
and~\ref{thmms-oi} cannot be improved.
The tools used in the present paper to prove lower bounds of the form
\eqref{eqexample-lower-range}, in particular Lemma~\ref{lemmaminimax},
can be used to prove similar results for regression model.
The tight lower bounds of the present paper contrast
with the existing literature on the regression model,
since to our knowledge, there is no lower bound of the form \eqref
{eqexample-lower-range} available for regression.

In the regression model with random design, given a class of functions
$G$, a penalty $\operatorname{pen}( \cdot )$, a~coefficient $\nu>0$
and observations $(X_1,Y_1),\ldots,(X_n,Y_n)$,
penalized empirical risk minimization
solves the optimization problem
\begin{equation}
\label{eqexample-erm-regression}
\min_{g\in G}   \frac{1}{n} \sum
_{i=1}^n \bigl(g(X_i) -
Y_i\bigr)^2 + \nu\operatorname{pen}( g ).
\end{equation}
But if the distribution of the design is known,
the statistician can compute the quantity $\mathbb{E}[g(X)^2]$ for all
$g\in G$
and solve the following minimization problem that slightly differs from
\eqref{eqexample-erm-regression}:
\begin{equation}
\label{eqexample-erm-known} \min_{g\in G}   \mathbb{E}\bigl[g(X)^2
\bigr] - \frac{2}{n} \sum_{i=1}^n
g(X_i) Y_i + \nu \operatorname{pen}( g ).
\end{equation}
In the regression model, the distribution of the design is rarely known
so the
penalized ERM that solves \eqref{eqexample-erm-known}
has not received as much attention as the procedure \eqref
{eqexample-erm-regression}
when the distribution of the design is not known.
The density estimation setting studied in the present paper
is closer to the regression setting
with known design \eqref{eqexample-erm-known}
than to the regression setting with unknown design~\eqref
{eqexample-erm-regression}
studied in
\cite{lecue2014optimal}.
There are differences with respect
to the choice of coefficient of the penalty \eqref{eqdef-pen},
and to the form of the empirical process that appears in the analysis.
These differences are more thoroughly discussed in Section~\ref{sdiff-reg}.

The paper is organized as follows.
In Section~\ref{sselectors}, we show that
empirical risk minimization achieves a sharp oracle inequality
with slow rate, but this rate cannot be improved
among selectors. Two classical estimators,
the exponential weights aggregate and empirical risk minimization over
the convex hull of the dictionary functions, are shown to be suboptimal
in deviation.
In Section~\ref{sms},
we define a penalized procedure
that achieves the optimal rate $\frac{\log M}{n}$ in deviation,
and we provide a lower bound that shows that
neither the rate nor the deviation term can be improved.
Section~\ref{sminimax} proposes a definition of minimax optimality
in deviation and shows that it
is satisfied by the procedures given in Sections~\ref{sselectors} and  \ref{sms}.
Section~\ref{sproofs} is devoted to the proofs.

\section{Sub-optimality of selectors, ERM and exponential
weights}
\label{sselectors}

\subsection{Selectors}

Define a selector as a function of the form $f_{\hat J}$
where $\hat J$ is measurable with respect to $X_1,\ldots,X_n$
with values in $\{1,\ldots,M\}$.
It was shown in the regression framework
\cite{juditsky2008learning,rigollet2012sparse}
that selectors are suboptimal and cannot
achieve a better rate that $\sigma\sqrt{\frac{\log M}{n}}$ where
$\sigma^2$ is the variance of the regression noise.
The following theorem extends this lower bound for
selectors to density estimation.
The underlying measure $\mu$
is the Lebesgue measure on $\mathbf{R}^d$ for $d\ge1$.

\begin{thm}[(Lower bounds for selectors)]
\label{thmselectors}
Let $L>0$, and $M\ge2, n\ge1, d\ge1$ be integers.
Let $\mathcal{F}$ be the class of all densities $f$ with respect to
the Lebesgue measure on $\mathbf{R}^d$ such that $\Vert f\Vert_\infty
\le L$.
Let $x\ge0$ satisfying
\begin{eqnarray*}
&&\frac{\log(M) + x}{n} < 3.
\end{eqnarray*}
Then there exist $f_1, \ldots, f_M\in L^2(\mathbf{R}^d)$ with $\Vert
f_j\Vert_\infty\le L$ such that
the following lower bound holds:
\begin{eqnarray*}
&&\inf_{\hat S_n} \sup_{f\in\mathcal{F}} \mathbb{P}_f
\biggl( \Vert\hat S_n -f\Vert^2 - \inf
_{j=1,\ldots,M} \Vert f_j-f\Vert^2 \ge
\frac{L}{\sqrt{3}} \sqrt{\frac{x+ \log M}{n}} \biggr) \ge\frac{1}{24} \exp(-x),
\end{eqnarray*}
where
$\mathbb{P}_f$ denotes the probability with respect to $n$ i.i.d.
observations with density $f$
and the infimum is taken over all selectors $\hat S_n$.
\end{thm}

The proof of Theorem~\ref{thmselectors} is given in Section~\ref{sproofs}.
It can be extended to other measures as soon as
the underlying measurable space allows the construction
of an orthogonal system such as the one described in Proposition~\ref
{proprademacher} below.

For any $g\in L^2(\mu)$, define the empirical risk
\begin{equation}
\label{eqempiricalRn} R_n(g) = \llVert g \rrVert ^2 -
\frac{2}{n} \sum_{j=1}^M
g(X_i).
\end{equation}
The empirical risk (\ref{eqempiricalRn}) is an unbiased estimator of
the risk (\ref{eqlossR}).
In order to explain the idea behind the proof of our main result
described in Theorem~\ref{thmms-oi}, it is useful the prove the following
oracle inequality for the empirical risk minimizer over
the discrete set $\{f_1,\ldots,f_M\}$.
\begin{thm}
\label{thmerm-oi}
Assume that the functions $f_1, \ldots, f_M\in L^2(\mu)$ satisfy
$\llVert  f_j \rrVert _\infty\le L_0$ for all $j=1,\ldots,M$.
Define
\begin{eqnarray*}
&&\hat J \in\mathop{\operatorname{argmin}}_{j=1,\ldots,M} \Biggl( \llVert
f_j \rrVert ^2 - \frac{2}{n} \sum
_{i=1}^n f_j (X_i)
\Biggr).
\end{eqnarray*}
Then for any $x>0$, with probability greater than $1-\exp(-x)$,
\begin{eqnarray*}
R(f_{\hat J}) &\le&\min_{j=1,\ldots,M} R(f_j) +
L_0 \biggl( 4 \sqrt{2} \sqrt{\frac{x + \log M}{n}} +
\frac{8(x + \log M)}{3n} \biggr).
\end{eqnarray*}
\end{thm}
Together with Theorem~\ref{thmselectors}, Theorem~\ref{thmerm-oi} shows that empirical
risk minimization is optimal among selectors.
Unlike the oracle inequality of Theorem~\ref{thmms-oi} below,
this result applies for any density $f$,
with possibly $\llVert  f \rrVert _\infty = \infty$.
Its proof relies on the concentration
of $R_n(g) - R(g)$ around $0$ for fixed functions $g$ with $\llVert
g \rrVert _\infty\le L_0$.
\begin{pf*}{Proof of Theorem~\protect\ref{thmerm-oi}}
We will use the following notation that is common in the literature
on empirical processes. For any $g\in L^2(\mu)$, define
\begin{eqnarray}
P g & = &\int gf \,d\mu,
\nonumber
\\[-8pt]
\label{eqPPn1}
\\[-8pt]
\nonumber
P_n g & =& \frac{1}{n} \sum_{i=1}^n
g(X_i).
\end{eqnarray}
With this notation, the difference between the real risk (\ref
{eqlossR}) and the empirical risk (\ref{eqempiricalRn}) can be rewritten
\begin{equation}
\label{eqPPn}
R(g) - R_n(g) = (P-P_n) (-2g).
\end{equation}

Let $J^*$ be such that $R(f_{J^*}) = \min_{j=1,\ldots,M} R(f_j)$.
The definition of $\hat J$ yields
$R_n(f_{\hat J}) \le R_n(f_{J^*})$.
Using (\ref{eqPPn}),
it can be rewritten
\begin{eqnarray*}
&& R(f_{\hat J}) - R(f_{J^*}) \le(P-P_n) (-2
f_{\hat J} + 2 f_{J^*}).
\end{eqnarray*}
We can control the right-hand side of the last display
using the concentration inequality (\ref{eqbernstein})
with a union bound
over $j=1,\ldots,M$.
For any $ t > 0$, with probability greater than $1-M\exp(-t)$,
\begin{eqnarray*}
(P-P_n) (-2 f_{\hat J} + 2 f_{J^*}) & \le & \max
_{j=1,\ldots,M} (P-P_n) (-2 f_j + 2
f_{J^*})
\\
& \le & \sigma\sqrt{\frac{2t}{n}} + \frac{8 L_0 t}{3 n},
\end{eqnarray*}
where $\sigma^2 = \max_{j=1,\ldots,M} P(-2 f_j + 2 f_{J^*})^2 \le16 L_0^2$.
Setting $x = t - \log M$ yields the desired oracle inequality.
\end{pf*}
By inspecting the short proof above, we see that the slow rate term
$\sqrt{\frac{x+\log M}{n}}$
comes from the variance term in the concentration inequality (\ref
{eqbernstein}).

We can draw two conclusions from Theorems~\ref{thmselectors} and~\ref{thmerm-oi}.
\begin{itemize}
\item In order to achieve faster rates than $\sqrt{\frac{\log M}{n}}$,
we need to look for estimators taking values beyond the discrete set $\{
f_1,\ldots,f_M\}$.
In Section~\ref{sms}, we will consider estimators
taking values in the convex hull of this discrete set.
\item The proof of Theorem~\ref{thmerm-oi} suggests that
a possible way to derive an oracle inequality with fast rates
is to cancel the variance term in the concentration inequality (\ref
{eqbernstein}).
In order to do this, we need some positive gain
on the empirical risk of our estimator.
Namely, for some oracle ${J^*}$ we would like
our estimator $\hat f_n$ to satisfy $R_n(\hat f_n) \le R_n(f_{J^*})$
minus some positive value.
This value is given by the strong convexity of the empirical objective
in Proposition~\ref{propH}.
\end{itemize}

Define the simplex in $\mathbf{R}^M$:
\begin{equation}\label{eqsimplex}
\Lambda^M= \Biggl\{ \theta\in\mathbf{R}^M, \sum
_{j=1}^M\theta_j = 1, \forall j=1,\ldots,
M, \theta_j \ge0 \Biggr\}.
\end{equation}
Given a finite set or \textit{dictionary} $\{f_1,\ldots,f_M\}$,
define for any $\theta\in\Lambda^M$
\begin{equation}\label{eqftheta}
f_\theta= \sum_{j=1}^M
\theta_j f_j.
\end{equation}
In particular, $f_j = f_{e_j}$ where $e_1,\ldots,e_M$
are the vectors of the canonical basis in $\mathbf{R}^M$.

Two classical estimators, the ERM over the convex hull of $f_1,\ldots,f_M$
and the exponential weights aggregate, are known to be sub-optimal
in the regression setting \cite
{dai2012deviation,lecue2009aggregation,lecue2010sharper,lecue2013optimality}.
In the following we show that the same conclusions hold for density
estimation with the $L^2$ risk.

\subsection{ERM over the convex hull}

A first natural estimator valued in the convex hull of the dictionary functions
is the ERM.
However, as in the regression setting \cite{lecue2009aggregation},
this estimator is suboptimal with an unavoidable error term or order
$1/\sqrt{n}$.

\begin{prop}
\label{properm-hull}
Let $\mathcal{X}= \mathbf{R}$ and $\mu$ be the Lebesgue measure on
$\mathbf{R}$.
There exist absolute constants $C_0, C_1, C_2, C_3 > 0$ such
that the following holds.
Let $L>0$.
For any integer $n\ge1$,
there exist a density $f$ bounded by $L$ and a dictionary $\{
f_1,\ldots,f_M\}$ of functions bounded by $2L$,
with $C_0 \sqrt{n} \le M \le C_1 \sqrt{n}$,
such that with probability greater than $1-12 \exp(-C_2 M)$,
\begin{eqnarray*}
&&\llVert f_{{\hat\theta}^{\mathrm{ERM}}} - f \rrVert ^2 \ge\min_{j=1,\ldots,M}
\llVert f_j - f \rrVert ^2 + \frac{C_3 L }{\sqrt{n}},
\end{eqnarray*}
where ${\hat\theta}^{\mathrm{ERM}} := \operatorname{arg}\operatorname
{min}_{\theta\in\Lambda^M}
R_n(f_\theta)$.
\end{prop}

The proof of Proposition~\ref{properm-hull} can be found in Section~\ref{sproofhull}.

\subsection{Exponential weights}

The exponential weights aggregate is known to achieve optimal oracle
inequalities in expectation
when the temperature parameter $\beta> 0$ is chosen carefully \cite
{leung2006information,dalalyan2007aggregation,juditsky2008learning}.
Given prior weights $(\pi_1,\ldots,\pi_M)^T\in\Lambda^M$, it can be
defined as follows:
\begin{eqnarray*}
&& \hat f^{\mathrm{EW}}_\beta= \sum_{j=1}^M
\hat\theta^{\mathrm{EW}, \beta}_j f_j, \qquad  \hat
\theta^{\mathrm{EW}, \beta} \in\Lambda^M, \qquad  \hat\theta^{\mathrm{EW}, \beta}_j
\propto\pi_j \exp \biggl( - \frac
{n}{\beta} R_n(f_j)
\biggr).
\end{eqnarray*}
The following proposition shows that it is suboptimal in deviation for
any temperature,
with a error term of order at least $1/\sqrt{n}$.
This phenomenon was observed in the regression setting \cite
{dai2012deviation,lecue2009aggregation},
and Proposition~\ref{propew} shows that it also holds for density estimation.
As opposed to \cite{dai2012deviation}, the following lower bound
requires only $3$ dictionary functions.

\begin{prop}
\label{propew}
There exist absolute constants $C_0,C_1, N_0 > 0$ such that the
following holds.
Let $\mathcal{X}= \mathbf{R}$ and $\mu$ be the Lebesgue measure on
$\mathbf{R}$.
For all $n\ge N_0, L>0$, there exist a probability density $f$ with
respect to $\mu$,
a dictionary $\{f_1,f_2,f_3\}$ and prior weights $(\pi_1,\pi_2,\pi
_3)\in\Lambda^3$
such that
with probability greater than $C_0$,
\begin{eqnarray*}
&&\bigl\llVert \hat f^{\mathrm{EW}}_\beta- f \bigr\rrVert
^2 \ge\min_{j=1,2,3} \llVert f_j - f
\rrVert ^2 + \frac{C_1 L}{\sqrt{n}}.
\end{eqnarray*}
Furthermore, $\llVert  f \rrVert _\infty\le L$, and $\llVert  f_j \rrVert _\infty\le3L$ for $j=1,2,3$.
\end{prop}

The following proposition shows that the optimality in expectation
cannot hold
if the temperature is below a constant, extending a result from \cite
{lecue2009aggregation}
to the density estimation setting.

\begin{prop}
\label{propew2}
Let $\mathcal{X}= \mathbf{R}$ and $\mu$ be the Lebesgue measure on
$\mathbf{R}$.
There exist absolute constants $c_0, c_1, c_2 > 0$ such
that the following holds.
Let $L>0$.
For any odd integer $n\ge c_0$,
there exist a probability density $f$ with respect to $\mu$ with
$\llVert  f \rrVert _\infty \le L$,
and a dictionary $\{f_1, f_2\}$ with $f_j:\mathcal{X}\rightarrow
\mathbf{R}$
and $\llVert  f_j \rrVert _\infty \le L$ for $j=1,2$
for which the following holds:
\begin{eqnarray*}
&&\mathbb{E}\bigl\llVert \hat f^{\mathrm{EW}}_\beta- f \bigr\rrVert
^2 \ge\min_{j=1,2} \llVert f_j - f
\rrVert ^2 + \frac{c_2 L}{\sqrt{n}} \mbox{ if $\beta\le c_1
L$.}
\end{eqnarray*}
\end{prop}

The proofs of Propositions~\ref{propew} and~\ref{propew2} can be found in
Section~\ref{sproofew}.

\section{Optimal exponential bounds for a penalized procedure}
\label{sms}

\subsection{From strong convexity to a sharp oracle inequality}

In this section, we derive a sharp oracle inequality
for the estimator $f_{\hat\theta}$ where ${\hat\theta}$ is defined
in~(\ref{eqdef-that}).
Define the empirical objective $H_n$ and the estimator ${\hat\theta}$ by
\begin{eqnarray}\label{eqdef-Hn}
H_n(\theta) & =&  \Biggl( \llVert f_\theta \rrVert
^2 - \frac
{2}{n} \sum_{i=1}^n
f_\theta(X_i) \Biggr) + \frac{1}{2}
\operatorname{pen}( \theta ) +\frac{\beta}{n}\sum_{j=1}^M
\theta_j \log\frac{1}{\pi_j},
\\
\label{eqdef-that}
{\hat\theta}& \in& \mathop{\operatorname{argmin}}_{\theta\in
\Lambda^M}
H_n(\theta),
\end{eqnarray}
for some positive constant $\beta$ and
\begin{equation}
\label{eqdef-pen} \forall\theta\in\Lambda^M,\qquad \operatorname{pen}( \theta
) = \sum_{j=1}^M\theta_j
\llVert f_\theta- f_j \rrVert ^2.
\end{equation}
The simplex $\Lambda^M$ and $f_\theta$ are defined in
\eqref{eqsimplex} and \eqref{eqftheta}.

The term
\begin{eqnarray*}
\label{eqK} &&\frac{\beta}{n}\sum_{j=1}^M
\theta_j \log\frac{1}{\pi_j}
\end{eqnarray*}
is a penalty that assigns different weights to the functions $f_j$
according to some prior knowledge given by $\pi_1,\ldots,\pi_M$, in order
to achieve an oracle inequality such as (\ref{eqoracle-ineq}).

The\vspace*{1pt} penalty (\ref{eqdef-pen}) as well as the present procedure are
inspired by recent works
on Q-aggregation in regression models
\cite{rigollet2012kullback,dai2012deviation,lecue2014optimal}.
The choice of the coefficient $\frac{1}{2}$ for the penalty (\ref{eqdef-pen})
is explained in Remark~\ref{remchoice} below.
An intuitive interpretation of the penalty (\ref{eqdef-pen})
can be as follows.
A point $f_\theta$ is in the convex hull of $\{f_1,\ldots,f_M\}$ if
and only if
it is the expectation of a random variable taking values in $\{
f_1,\ldots,f_M\}$.
The penalty (\ref{eqdef-pen}) can be seen as the variance
of such a random variable whose distribution is given by $\theta$.
More precisely, let $\eta$ be a random variable
with $\mathbb{P} ( \eta= j  ) = \theta_j$ for all
$j=1,\ldots,M$.
Denote by $\mathbb{E}_\theta$ the expectation with respect to
the random variable $\eta$.
Then $\mathbb{E}_\theta[ f_\eta]= f_\theta$ and
\begin{eqnarray*}
&&\operatorname{pen}( \theta ) = \mathbb{E}_\theta\bigl\llVert
f_\eta - \mathbb{E}_\theta[ f_\eta] \bigr\rrVert
^2,
\end{eqnarray*}
which is the variance of the random point $f_\eta$.
The penalty (\ref{eqdef-pen}) vanishes at the extreme points:
\begin{eqnarray*}
&&\forall j=1,\ldots, M, \qquad\operatorname{pen}( e_j ) = 0,
\end{eqnarray*}
and $\operatorname{pen}( \theta )$ increases
as $\theta$ moves away from an extreme point $e_j$.
Thus we convexify the optimization problem
over the discrete set $\{f_1,\ldots,f_M\}$
by considering the convex set $\{\mathbb{E}_\theta[f_\eta], \theta
\in
\Lambda^M\}$
which is exactly the convex hull of $\{f_1,\ldots,f_M\}$,
and we penalize by the variance of the random point $f_\eta$.

It is also possible to describe the level sets of the penalty \eqref
{eqdef-pen}.
Assume only in this paragraph that the Gram matrix of $f_1,\ldots,f_M$ is
invertible and let $c\in L^2(\mu)$ be in the linear span of
$f_1,\ldots,f_M$ such that for all $j=1,\ldots,M$, $\int2 c f_j \,d\mu=
\llVert  f_j \rrVert ^2$.
Then simple algebra yields
\begin{eqnarray*}
&&\operatorname{pen}( \theta ) = \llVert c \rrVert ^2 - \llVert
c-f_\theta \rrVert ^2.
\end{eqnarray*}
Thus the level sets of the penalty \eqref{eqdef-pen} are euclidean
balls centered at $c$.

Last, note that $f_{\hat\theta}$ coincides with the $Q$-aggregation procedure
from \cite{dai2012deviation} since
\begin{eqnarray*}
&&\Biggl( \llVert f_\theta \rrVert ^2 - \frac{2}{n}
\sum_{i=1}^n f_\theta(X_i)
\Biggr) + \frac{1}{2} \operatorname{pen}( \theta ) = R_n(
\theta) + \frac{1}{2} \operatorname{pen}( \theta ) = \frac{1}{2}
\Biggl( R_n(\theta) + \sum_{j=1}^M
\theta_j R_n(f_j) \Biggr) .
\end{eqnarray*}

We propose an estimator $f_{\hat\theta}$ based on
penalized empirical risk minimization over the simplex,
with ${\hat\theta}$ defined in (\ref{eqdef-that}).
This estimator satisfies the following oracle inequality.
\begin{thm}
\label{thmms-oi}
Assume that the functions $f_1, \ldots, f_M$ satisfy $\llVert  f_j
\rrVert _\infty\le
L_0$ for all $j=1,\ldots,M$,
and assume that the unknown density $f$ satisfies $\llVert  f \rrVert _\infty\le L$.
Let ${\hat\theta}$ be defined in (\ref{eqdef-that}) with
\begin{eqnarray*}
\beta&=&  4L + \frac{8 L_0}{3}.
\end{eqnarray*}
Then for any $x > 0$, with probability greater than $1-\exp(- x)$,
\begin{eqnarray}\label{eqoracleineqbeta}
R(f_{\hat\theta}) &\le & \min_{j=1,\ldots,M} \biggl(R(f_j)
+ \frac
{\beta
}{n}\log\frac{1}{\pi_j} \biggr) + \frac{\beta x}{n}.
\end{eqnarray}
\end{thm}

The following proposition specifies the property of strong convexity of the
objective function
$H_n(\cdot)$ defined in (\ref{eqdef-Hn}), which is key
to prove Theorem~\ref{thmms-oi}.

\begin{prop}[(Strong convexity of $H_n$)]
\label{propH}
Let $H_n$ and ${\hat\theta}$ be defined by (\ref{eqdef-Hn}) and
(\ref
{eqdef-that}), respectively.
Then for any $\theta\in\Lambda^M$,
\begin{equation}\label{eqthat-H}
H_n({\hat\theta}) \le H_n(\theta) - \tfrac{1}{2}
\llVert f_\theta- f_{\hat\theta} \rrVert ^2.
\end{equation}
\end{prop}
For any $\theta\in\Lambda^M$,
empirical risk minimization only grants the simple inequality
\begin{eqnarray*}
&& R_n({\hat\theta}) \le R_n(\theta),
\end{eqnarray*}
but with Proposition~\ref{propH} we gain the extra term $\frac{1}{2} \llVert  f_\theta- f_{\hat\theta} \rrVert ^2$.
To prove Theorem~\ref{thmms-oi}, we will use this extra term
to compensate the variance term of the concentration
inequality \eqref{eqbernstein2}.
Strong convexity plays an important role in our proofs,
and we believe that our arguments would not work
for loss functions that are not strongly convex such
as the Hellinger distance, the Total Variation distance
or the Kullback--Leibler divergence.

The proof of Proposition~\ref{propH} is given
in Section~\ref{sproofs-H}.
We now give the proof of our main result, which is close
to the proof of Theorem~\ref{thmerm-oi} except that we leverage the strong convexity
of the empirical objective $H_n$.
\begin{pf*}{Proof of Theorem~\protect\ref{thmms-oi}}
Note that $\operatorname{pen}( e_j ) = 0$ for $j=1,\ldots,M$ and let
\begin{eqnarray*}
{J^*}& \in& \mathop{\operatorname{argmin}}_{j=1,\ldots,M} \biggl( \llVert
f_j \rrVert ^2 - 2 \int f_j f \,d\mu+
\frac{\beta}{n} \log\frac{1}{\pi_j} \biggr) = \mathop{\operatorname{argmin}}_{j=1,\ldots,M} \mathbb{E} \bigl[ H_n(e_j)
\bigr].
\end{eqnarray*}
Using (\ref{eqthat-H}) of Proposition~\ref{propH}
\begin{eqnarray*}
H_n({\hat\theta}) - H_n(e_{J^*}) & \le& -
\frac{1}{2} \llVert f_{J^*}- f_{\hat\theta} \rrVert
^2,
\\
R_n({\hat\theta}) + \frac{\beta}{n}\sum
_{j=1}^M{\hat\theta}_j \log
\frac{1}{\pi_j} - R_n(e_{J^*}) - \frac{\beta}{n} \log
\frac{1}{\pi_{J^*}} & \le & - \frac{1}{2} \llVert f_{J^*}-
f_{\hat
\theta} \rrVert ^2 - \frac{1}{2} \operatorname{pen}(
{\hat \theta} )
\\
& =&  - \frac{1}{2} \sum_{j=1}^M{
\hat\theta}_j \llVert f_j - f_{J^*} \rrVert
^2,
\end{eqnarray*}
where we used Proposition~\ref{propbias-variance} with $g=f_{J^*}$ for the
last display.
Using (\ref{eqPPn}), we get
\begin{eqnarray*}
\label{eqlastd}
&& R(f_{\hat\theta}) - R(f_{J^*}) - \frac{\beta}{n}
\log \frac{1}{\pi_{J^*}}  \le Z_n,
\end{eqnarray*}
where
\begin{eqnarray*}
Z_n = (P-P_n) (- 2 f_{\hat\theta}+ 2 f_{J^*})
- \frac{\beta}{n}\sum_{j=1}^M{\hat
\theta}_j \log\frac{1}{\pi_j} - \frac{1}{2} \sum
_{j=1}^M{\hat\theta}_j \llVert
f_j - f_{J^*} \rrVert ^2
\end{eqnarray*}
and the notation $P$ and $P_n$ is defined in (\ref{eqPPn1}) and (\ref
{eqPPn}).
The quantity $Z_n$ is affine in $\theta$
and an affine function over the simplex is maximized at a vertex, so almost
surely,
\begin{eqnarray}
Z_n  &\le &  \max_{\theta\in\Lambda^M}  \Biggl( -2
(P-P_n) (f_\theta-f_{J^*}) - \frac{1}{2}
\sum_{j=1}^M\theta_j \llVert
f_{J^*}- f_j \rrVert ^2 - \frac{\beta}{n}\sum
_{j=1}^M\theta_j \log
\frac{1}{\pi_j} \Biggr)
\nonumber
\\[-8pt]
\label{eqmax-on-k}
\\[-8pt]
\nonumber
&=& \max_{k=1,\ldots,M}  \biggl( -2 (P-P_n)
(f_k - f_{J^*}) - \frac
{1}{2} \llVert
f_k -f_{J^*} \rrVert ^2 - \frac{\beta}{n}
\log\frac{1}{\pi
_k} \biggr).
\end{eqnarray}
Let $k=1,\ldots,M$ fixed.
Applying Proposition~\ref{propstochastic-term} with $g=-2(f_k - f_{J^*})$ and
$\pi=\pi_k$ yields
\begin{eqnarray*}
&& \mathbb{P} \biggl( -2 (P-P_n) (f_k -f_{J^*}) -
\frac{1}{2} \llVert f_k - f_{J^*} \rrVert
^2 - \frac{\beta}{n} \log\frac{1}{\pi
_k} > \frac{\beta x}{n}
\biggr) \le\pi_k \exp(-x).
\end{eqnarray*}
To complete the proof,
we use a union bound on $k=1,\ldots,M$ together with $\sum_{j=1}^M\pi
_j = 1$
and~(\ref{eqmax-on-k}):
\begin{eqnarray*}
&&\mathbb{P} \biggl( Z_n > \frac{\beta x}{n} \biggr) \le\sum
_{k=1}^M \pi_k \exp(-x) = \exp(-x).
\end{eqnarray*}
\upqed\end{pf*}

\begin{remark}[(Choice of the coefficient of the penalty (\protect\ref{eqdef-pen}))]
\label{remchoice}
Let $\nu\in(0,1)$.
With minor modifications to the proof of Theorem~\ref{thmms-oi},
it can be shown that
the oracle inequality (\ref{eqoracleineqbeta})
still holds with
\begin{eqnarray*}
\beta& =&  \frac{2L}{\min(\nu,1-\nu)} + \frac{8 L_0}{3},
\\
H_n(\theta) & =& \Biggl( \llVert f_\theta \rrVert
^2 - \frac
{2}{n} \sum_{i=1}^n
f_\theta(X_i) \Biggr) + \nu\operatorname{pen}( \theta ) +
\frac{\beta}{n}\sum_{j=1}^M
\theta_j \log\frac{1}{\pi_j},
\\
{\hat\theta}& \in & \mathop{\operatorname{argmin}}_{\theta\in
\Lambda^M}
H_n(\theta).
\end{eqnarray*}
The oracle inequality (\ref{eqoracleineqbeta}) is best when $\beta$
is small.
Thus, the choice $\nu=\frac{1}{2}$ is natural since it
minimizes the value of $\beta$.
\end{remark}

The optimization problem \eqref{eqdef-that}
is a quadratic program, for which efficient algorithms exist.
We refer to \cite{dai2012deviation}, Section~4,  for an analysis
of the statistical performance of an algorithm
that approximately solves a optimization problem similar to
\eqref{eqdef-that} in the regression setting.

The estimator ${\hat\theta}$ of Theorem~\ref{thmms-oi} is not adaptive
since its construction relies on $L$,
an upper bound of the sup-norm of the unknown density.
However, in the case of the uniform prior
$\pi_j = 1/M$ for all $j=1,\ldots,M$,
Corollary~\ref{coruniform} below provides an estimator
which is fully adaptive:
its construction depends only on the functions $f_1, \ldots, f_M$
and the data $X_1, \ldots, X_n$.
A similar adaptivity property was
observed in \cite{lecue2014optimal} in the regression setting.

\begin{cor}[(Adaptive estimator)]
\label{coruniform}
Assume that the functions $f_1, \ldots, f_M$ satisfy $\llVert  f_j
\rrVert _\infty\le
L_0$ for all $j=1,\ldots,M$,
and assume that the unknown density $f$ satisfies $\llVert  f \rrVert _\infty\le L$.
Let
\begin{equation}\label{eqdef-that-uniform}
{\hat\theta}\in\mathop{\operatorname{argmin}}_{\theta\in
\Lambda^M} \Biggl(
\llVert f_\theta \rrVert ^2 - \frac{2}{n} \sum
_{i=1}^n f_\theta(X_i) \Biggr)
+ \frac{1}{2} \operatorname{pen}( \theta ).
\end{equation}
Then for any $x > 0$, with probability greater than $1-\exp(- x)$,
\begin{eqnarray*}
&& R(f_{\hat\theta}) \le\min_{j=1,\ldots,M} R(f_j) +
\biggl( 4L + \frac{8 L_0}{3} \biggr) \frac{\log(M) + x}{n}.
\end{eqnarray*}
\end{cor}
\begin{pf}
With the uniform prior, $\pi_j = 1/M$ for all $j=1,\ldots, M$, the quantity
\begin{eqnarray*}
&&\frac{\beta}{n}\sum_{j=1}^M
\theta_j \log\frac{1}{\pi_j} = \frac
{\beta}{n} \log M
\end{eqnarray*}
is independent of $\theta\in\Lambda^M$.
The minimizer (\ref{eqdef-that-uniform}) is also a minimizer
of the empirical objective~(\ref{eqdef-Hn}) used in Theorem~\ref{thmms-oi}.
Thus, the estimator $f_{\hat\theta}$ satisfies (\ref{eqoracleineqbeta})
which completes the  proof.
\end{pf}

Corollary~\ref{coruniform} is in contrast to
methods related to exponential weights
such as the mirror averaging algorithm from
\cite{juditsky2008learning} as these methods rely
on the knowledge of the sup-norm of the unknown density.
The method presented here is an improvement in two aspects.
First,
the estimator of Corollary~\ref{coruniform} is fully data-driven.
Second, the sharp oracle inequality is satisfied
not only in expectation, but also in deviation.

However, the method of Theorem~\ref{thmms-oi}
loses this adaptivity property
when a non-uniform prior is used,
and we do not know if it is possible to build an optimal
and fully adaptive estimator for non-uniform priors.

\subsection{A lower bound with exponential tails}

The following lower bound
shows that the sharp oracle inequality of Corollary~\ref{coruniform}
cannot be improved both in the rate and in the tail of the deviation.
\begin{thm}[(Lower bounds with optimal deviation term)]
\label{thmlower-exp}
Let $M\ge2, n\ge1$ be two integers and
let a real number $x\ge0$ satisfy
\begin{eqnarray*}
&&\frac{\log(M) + x}{n} < 3.
\end{eqnarray*}
Let $L>0$ and $d\ge1$. Let $\mathcal{F}$ be the class of densities
$f$ with respect to
the Lebesgue measure on $\mathbf{R}^d$ such that $\Vert f\Vert_\infty
\le L$.

Then there exist $M$ functions $f_1,\ldots,f_M$
in $L^2(\mathbf{R}^d)$ with
$\llVert  f_j \rrVert _\infty \le L$ satisfying
\begin{eqnarray*}
&&\inf_{\hat T_n} \sup_{f \in\mathcal{F}} \mathbb{P}_f
\biggl( \llVert \hat T_n -f \rrVert ^2 - \min
_{j=1,\ldots,M} \llVert f_j -f \rrVert ^2 >
\frac
{L}{24} \biggl( \frac{\log(M) + x}{n} \biggr) \biggr) \ge\frac{1}{24}
\exp(-x),
\end{eqnarray*}
where the infimum is taken over all estimators $\hat T_n$ and
$\mathbb{P}_f$ denotes the probability with respect to $n$ i.i.d.
observations with density $f$.
\end{thm}
Notice that the restriction
$\frac{\log(M) + x}{n} < 3$
is natural since the estimator $\hat T_n^* \equiv0$ achieves a
constant error term
and is optimal in the region
$\frac{\log(M) + x}{n} > c$ for some absolute constant $c$.
Indeed, as the unknown density satisfies $\llVert  f \rrVert
_\infty\le L$, we have
with probability $1$:
\begin{eqnarray}
\bigl\llVert \hat T_n^* - f \bigr\rrVert ^2 &=&  \llVert f
\rrVert ^2 \le L  \le\inf_{j=1,\ldots,M} \llVert
f-f_j \rrVert ^2 + L,
\nonumber
\\[-8pt]
\label{eqdummy0}
\\[-8pt]
\nonumber
R\bigl(\hat T_n^*\bigr) & \le & \inf_{j=1,\ldots,M}
R(f_j) + L.
\end{eqnarray}
Thus it is impossible to get the lower bound of Theorem~\ref{thmlower-exp}
for arbitrarily large $\frac{x+\log M}{n}$.

\subsection{Weighted loss and unboundedness}

The previous strategy based on penalized risk minimization over the simplex
can be applied to handle unbounded densities or unbounded dictionary functions,
if we use a weighted loss.

Let $w:\mathcal{X}\rightarrow\mathbf{R}^+$ be a measurable function with
respect to $\mu$.
Define the norm (or semi-norm if $w$ is zero on a set of positive measure)
\begin{eqnarray*}
&& \llVert g \rrVert ^2_w = \int
g^2 w \,d\mu,  \qquad  \forall g\in L^2(\mu).
\end{eqnarray*}
Then we can define the estimator $f_{\hat\theta}$ where
\begin{eqnarray*}
&&{\hat\theta}= \mathop{\operatorname{argmin}}_{\theta\in
\Lambda^M}
V_n(\theta), \qquad   V_n(\theta) = P_n
\Biggl( \llVert f_\theta \rrVert ^2_w -
\frac{2}{n} \sum_{i=1}^n
f_\theta(X_i)w(X_i) + \frac{1}{2} \sum
_{j=1}^M \theta_j \llVert
f_j-f_\theta \rrVert ^2_{w} \Biggr).
\end{eqnarray*}
The function $V_n$ is strongly convex with respect to the new norm
$\llVert \cdot \rrVert ^2_w$.
As in the proof of Theorem~\ref{thmms-oi}, this leads to
\begin{eqnarray*}
\llVert f_{\hat\theta}- f \rrVert ^2_{w} &\le & \llVert
f_{J^*}- f \rrVert ^2_{w} + \max
_{k=1,\ldots,M} \delta_k,
\\
 \delta_k
& := & (P-P_n) \bigl(-2(f_{J^*}-f_k)w\bigr) -
\frac{1}{2} \llVert f_{J^*}- f_k \rrVert
^2_{w}.
\end{eqnarray*}
If for some $L, L_0>0$, $\llVert  wf \rrVert _\infty\le L$ and
$\max_{j=1,\ldots,M}
\llVert  w f_j \rrVert _\infty\le L_0$, then
\begin{eqnarray*}
&&\delta_k \le-2 (P-P_n) \bigl((f_k -
f_{J^*})w\bigr) - \frac{1}{2L} \mathbb{E} \bigl[
\bigl(f_k(X) - f_{J^*}(X)\bigr)^2
w(X)^2\bigr].
\end{eqnarray*}
We apply \eqref{eqbernstein2} to the random variables $(f_k-f_{J^*}
)(X_i)w(X_i)$,
which are almost surely bounded by $L_0$.
Using the union bound on $k=1,\ldots,M$, we obtain
$\max_{k=1,\ldots,M} \delta_k \le\beta(x+\log M)/n$
with probability greater than $1-\exp(-x)$.
and thus
\begin{eqnarray*}
&&\llVert f_{\hat\theta}- f \rrVert ^2_{w} \le\llVert
f_{J^*}- f \rrVert ^2_{w} + \beta \biggl(
\frac{x + \log M}{n} \biggr),
\end{eqnarray*}
where $\beta= c  (L+L_0)$ for some numerical constant $c>0$.

\subsection{Differences and similarities with regression
problems}
\label{sdiff-reg}

Here, we discuss differences and similarities between
aggregation of
density and regression estimators.
Some notation is needed in order to compare these settings.

We first define some notation related to the Density Estimation (DE) framework
studied in the present paper.
Let $X$ be a random variable with density $f$ absolutely continuous
with respect to the measure $\mu$,
let $\mathcal{D}^\mathrm{DE} = \{f_1,\ldots,f_M\}$ be a subset of
$L^2(\mu)$ and define for all $g\in L^2(\mu)$ and $x\in\mathcal{X}$,
\begin{eqnarray*}
\Vert g \Vert^2 &=& \int g^2 \,d\mu,\qquad
l_g^{\mathrm{DE}}(x) = \llVert g \rrVert ^2
- 2g(x),
\\
g^* &=& \mathop{\operatorname{argmin}}_{g\in\mathcal{D}^{\mathrm{DE}}} \Vert g -
f\Vert^2 = \mathop{\operatorname{argmin}}_{g\in\mathcal{D}^{\mathrm{DE}}}
\mathbb{E}\bigl[l_g^{\mathrm{DE}}(X)\bigr].
\end{eqnarray*}
Given $n$ i.i.d. observations $X_1,\ldots,X_n$ and some fixed function $g$,
one can use the empirical risk $P_n ( l_g^{\mathrm{DE}} ) = \sum_{i=1}^n ({1}/{n})l_g^{\mathrm{DE}}(X_i)$.

We now define similar notation for the regression problem with the
$L^2$ loss.
Let $(X,Y)$ be a random couple valued in $\mathcal{X}\times\mathbf{R}$,
let $P_X$ be the probability measure of $X$,
let $f$ be the true regression function defined by $f(x) = \mathbb{E}[Y|X=x]$,
let $\mathcal{D}^\mathrm{R} = \{f_1,\ldots,f_M\}$ be a subset of $L^2(P_X)$
and define for all $g\in L^2(P_X)$,
\begin{eqnarray*}
&&\Vert g \Vert^2_{P_X} = \mathbb{E}\bigl[g(X)^2
\bigr],\qquad  g^* = \mathop{\operatorname{argmin}}_{g\in\mathcal{D}^{\mathrm{R}}} \Vert g -
f\Vert^2_{P_X}.
\end{eqnarray*}
For Regression with Unknown Design (RUD) that is, when the distribution
of the design $X$ is not known to the statistician,
a natural choice for the loss function $l_g$ is
\begin{eqnarray*}
&& l_g^{\mathrm{RUD}}(x,y) = \bigl(g(x) -y\bigr)^2,\qquad
\forall x,y\in\mathcal{X}\times\mathbf{R},
\end{eqnarray*}
and the oracle $g^*$ defined above satisfies $g^* = \operatorname
{arg}\operatorname{min}_{g\in
\mathcal{D}^{\mathrm{R}}} \mathbb{E}[l_g^{\mathrm{RUD}}(X,Y)]$.
For Regression with Known Design (RKD),
the quantity $\llVert  g \rrVert ^2_{P_X}$ is accessible for
all $g$.
Thus, we can define the loss
\begin{eqnarray*}
&& l_g^{\mathrm{RKD}}(x,y) = \llVert g \rrVert ^2_{P_X}
- 2 g(x)y, \qquad \forall x,y\in\mathcal{X}\times\mathbf{R},
\end{eqnarray*}
and the oracle $g^*$ satisfies $g^* = \operatorname{argmin}_{g\in\mathcal
{D}^{\mathrm{R}}} \mathbb{E}[l_g^{\mathrm{RKD}}(X,Y)]$.
Thus, two natural functions $l_g$ arise in the regression context,
depending on whether the distribution of the design is known or unknown.
Given $n$ i.i.d. observations $(X_i, Y_i)$ with the same distribution
as $(X,Y)$,
the empirical quantities $P_n ( l_g^{\mathrm{RUD}} )$
and $P_n ( l_g^{\mathrm{RKD}} )$ can be used to infer the true
regression function $f$.
An estimator constructed using the quantity $P_n ( l_g^{\mathrm{RKD}}
)$ is used,
for example, in
\cite{tsybakov2003optimal} for the problem of linear and convex aggregation.

\subsubsection*{Linear or quadratic empirical process}
The empirical process $(P_n-P)(l_g-l_{g^*})$ indexed by $g$ plays an
important role
in the proofs of Theorems~\ref{thmerm-oi} and \ref{thmms-oi}.
This empirical process also appears in the analysis \cite
{lecue2014optimal} for regression with unknown design with the loss
$l_g^{\mathrm{RUD}}$.
For density estimation and regression with known design, this empirical
process is linear in $g$:
\begin{eqnarray*}
(P_n-P) \bigl(l_g^\mathrm{DE}-l_{g^*}^\mathrm{DE}
\bigr) &=& - 2 (P_n-P) \bigl(g-g^*\bigr),
\\
 (P_n-P)
\bigl(l_g^\mathrm{RKD}-l_{g^*}^\mathrm{RKD}
\bigr) &=& - 2 (P_n-P)\bigl[\bigl(g-g^*\bigr)\dot{y}\bigr],
\end{eqnarray*}
where the function $\dot{y}(\cdot)$ above is defined by
$\forall x,y\in\mathcal{X}\times\mathbf{R},\dot{y}(x,y) = y$.
For regression when the design is unknown,
the empirical process is quadratic in the class member $g$.
To control the behavior of this quadratic empirical process,
the contraction principle is used in \cite{lecue2014optimal},
whereas this principle is not needed for density estimation or
regression when the distribution of the design is known.


\subsubsection*{The penalty \protect\eqref{eqdef-pen} and its coefficient}
In the regression problem when the distribution is known,
given a dictionary of potential regression functions $\{f_1,\ldots,f_M\}$,
the quantity
\begin{equation}
\label{eqpen-rkd} \sum_{j=1}^M
\theta_j \llVert f_j-f_\theta \rrVert
^2_{P_X},
\end{equation}
is accessible
and a procedure similar to the one proposed in Theorem~\ref{thmms-oi}
and Corollary~\ref{coruniform} can be constructed,
with the penalty coefficient $1/2$
which is a natural choice as explained in Remark~\ref{remchoice}.
For regression with unknown design, the above penalty cannot be computed:
the procedure \cite{lecue2014optimal} for the $L^2$ loss is the
estimator $f_{\hat\theta}$ where
\begin{eqnarray*}
{\hat\theta}& = & \mathop{\operatorname{argmin}}_{\theta\in
\Lambda^M} \bigl(
P_n \bigl( l_{f_\theta}^\mathrm{RUD} \bigr) + \nu
P_n (f_j - f_\theta)^2 \bigr)
\\
&= & \mathop{\operatorname{argmin}}_{\theta\in\Lambda^M} \Biggl(
\frac{1}{n}\sum_{i=1}^n
\bigl(Y_i - f_\theta(X_i)\bigr)^2 +
\frac{\nu}{n} \sum_{i=1}^n
(f_j - f_\theta)^2(X_i) \Biggr),
\end{eqnarray*}
for some coefficient $\nu\in(0,1)$ and where we chose the uniform
prior for clarity.
Thus, the procedure \cite{lecue2014optimal} can be formulated as a
penalized procedure
where the penalty is the empirical counterpart of \eqref{eqpen-rkd}
with the coefficient $\nu$.
Although $1/2$ is a natural choice for
regression with known design and density estimation,
for regression with unknown design
the expression of the optimal coefficient is more intricate
\cite{lecue2014optimal}, minimize $\beta$ in (1.4).

\subsubsection*{Sketch of proof for the regression model with known design}
In order to show the similarities
between density estimation and regression
problems when the design is known,
we now give the main ideas to derive an oracle inequality
similar to Corollary~\ref{coruniform} for regression with known design.
Note that the framework studied in \cite{lecue2014optimal} does not
cover the
estimator defined below, since the function $l_g^\mathrm{RKD}$ depends
on the quantity $\llVert  g \rrVert ^2_{P_X}$.
Given $n$ i.i.d. observations $(X_1,Y_1),\ldots,(X_n,Y_n)$, define
\begin{eqnarray*}
&&{\hat\theta}= \mathop{\operatorname{argmin}}_{\theta\in
\Lambda^M}
V_n(\theta),   \qquad V_n(\theta) = P_n
\bigl( l_{f_\theta}^\mathrm{RKD} \bigr) + \frac
{1}{2} \sum
_{j=1}^M \theta_j \llVert
f_j-f_\theta \rrVert ^2_{P_X}.
\end{eqnarray*}
Analogously to the argument of Proposition~\ref{propH}, we note that
the function $V_n$ is strongly convex and
$V_n({\hat\theta}) \le V_n(e_{J^*}) - \frac{1}{2} \llVert
f_{J^*}- f_{\hat\theta} \rrVert ^2_{P_X}$ for any ${J^*}=1,\ldots,M$.
As in the proof of Theorem~\ref{thmms-oi}, this leads to
\begin{eqnarray*}
\llVert f_{\hat\theta}- f \rrVert ^2_{P_X}  &\le & \llVert
f_{J^*}- f \rrVert ^2_{P_X} + \max
_{k=1,\ldots,M} \delta_k, \\
\delta_k
&:=& (P-P_n) \bigl(l_{f_k}^\mathrm{RKD} -
l_{f_{J^*}
}^\mathrm{RKD}\bigr) - \frac{1}{2} \llVert
f_{J^*}- f_k \rrVert ^2_{P_X}.
\end{eqnarray*}
As explained above,
when the distribution of the design is known,
the empirical process is linear in $f_k-f_{J^*}$:
\begin{eqnarray*}
&&\delta_k = -2 (P-P_n) \bigl((f_k -
f_{J^*})\dot{y}\bigr) - \tfrac{1}{2} \llVert f_k-f_{J^*}
\rrVert ^2_{P_X}.
\end{eqnarray*}
If for some $b>0$, $|Y|\le b$ and $\max_{j=1,\ldots,M} |f_j(X)| \le b$
almost surely, then
\begin{eqnarray*}
&&\delta_k \le-2 (P-P_n) \bigl((f_k -
f_{J^*})\dot{y}\bigr) - \frac{1}{2b^2} \mathbb{E}
\bigl[Y^2\bigl(f_k(X) - f_{J^*}(X)\bigr)\bigr].
\end{eqnarray*}
Using \eqref{eqbernstein2} and the union bound on $k=1,\ldots,M$, we obtain
$\max_{k=1,\ldots,M} \delta_k \le\beta(x+\log M)/n$
with probability greater than $1-\exp(-x)$
and thus
\begin{eqnarray*}
&&\llVert f_{\hat\theta}- f \rrVert ^2_{P_X} \le\llVert
f_{J^*}- f \rrVert ^2_{P_X} + \beta \biggl(
\frac{x + \log M}{n} \biggr),
\end{eqnarray*}
where $\beta= c  b^2$ for some numerical constant $c>0$.

In conclusion,
the density estimation framework studied in the present paper is close
to the regression problem when the distribution of the design is known,
while it presents several differences with the regression problem when
the design is not known.

\section{Minimax optimality in deviation}
\label{sminimax}

The goal of this section is to state a minimax optimality result
based on the lower bound of Theorem~\ref{thmlower-exp} and
the sharp oracle inequality of Corollary~\ref{coruniform}.
In this section, the underlying measure $\mu$ is the Lebesgue measure
on $\mathbb{R}^d$ for some integer $d\ge1$.

Minimax optimality in model selection type aggregation is usually
defined in expectation \cite{tsybakov2003optimal}, by studying the quantity
\begin{eqnarray*}
&&\mathop{\sup_{f_j\in\mathcal{F}}}_{j=1,\ldots, M} \inf_{\hat
T_n}
\sup_{f\in\mathcal{F}_d} \Bigl( \mathbb{E}R(\hat T_n) - \inf
_{j=1,\ldots,M} R(f_j) \Bigr),
\end{eqnarray*}
where the infimum is taken over all estimators $\hat T_n$,
$\mathcal{F}$ is a class of possible functions for the dictionary
and $\mathcal{F}_d$ is the class of all densities satisfying some
general constraints.

Let $\mu$ be the Lebesgue measure on $\mathbf{R}^d$
and for some $L>0$,
let $\mathcal{F} = \{g\in L^2(\mu), \llVert  g \rrVert
_\infty \le L\}$
and $\mathcal{F}_d$ be the set of all densities $f$ with respect to
$\mu$ satisfying $\llVert  f \rrVert _\infty\le L$.
Then, by an integration argument, Corollary~\ref{coruniform} and Theorem~\ref
{thmlower-exp} provide the following bounds
for some absolute constant $c,C>0$ and any $M\ge2,n\ge1$:
\begin{eqnarray*}
&& c \frac{L \log M}{n} \le\mathop{\sup_{f_j\in\mathcal{F}}}_{j=1,\ldots, M}
\inf_{\hat T_n} \sup_{f\in\mathcal{F}_d} \Bigl( \mathbb{E} R(\hat
T_n) - \inf_{j=1,\ldots,M} R(f_j) \Bigr) \le C
\frac{L
\log M}{n}.
\end{eqnarray*}
This shows that $\frac{L\log M}{n}$ is the optimal rate of convergence
in expectation for model selection type aggregation
under the boundedness assumption.

But our results are stronger that the above optimality in expectation since
the deviation term in the sharp oracle inequality
of Corollary~\ref{coruniform}
and in the lower bound of Theorem~\ref{thmlower-exp} are the same up to a
numerical constant.

The central quantity when dealing with optimality in deviation is, for $t>0$,
\begin{eqnarray*}
&&\mathop{\sup_{f_j\in\mathcal{F}}}_{j=1,\ldots, M} \inf_{\hat T_n}
\sup_{f\in\mathcal{F}_d} \mathbb{P} \Bigl( R(\hat T_n) - \inf
_{j=1,\ldots,M} R(f_j) > t \Bigr).
\end{eqnarray*}
The results of Section~\ref{sms} provide upper and lower bounds for this quantity.

We propose the following definition of minimax optimality in deviation.
\begin{mydef}[(Minimax optimality in deviation)]
\label{defminimax-optimality}
Let $\mathcal{F}$ be a subset of $L^2(\mu)$ and $\mathcal{F}_d$
be a set of densities with respect to the measure $\mu$.
Let $\mathcal{E}_n$ be a set of estimators.
Denote by $\mathbf{P}^{n,M}_{\mathcal{E}_n, \mathcal{F}, \mathcal
{F}_d}(t)$ the quantity
\begin{eqnarray*}
&& \mathbf{P}^{n,M}_{\mathcal{E}_n, \mathcal{F}, \mathcal{F}_d}(t) = \mathop{\sup
_{f_j\in\mathcal{F}}}_{j=1,\ldots, M} \inf_{\hat
T_n \in\mathcal{E}_n} \sup
_{f\in\mathcal{F}_d} \mathbb {P} \Bigl( R(\hat T_n) - \inf
_{j=1,\ldots,M} R(f_j) > t \Bigr).
\end{eqnarray*}
A function $p_{n,M}(\cdot)$ is called optimal tail distribution over
$(\mathcal{E}_n, \mathcal{F}, \mathcal{F}_d)$
if for any $n\ge1,M\ge2$ and any $t>0$,
\begin{eqnarray*}
&& c p_{n,M}\bigl(c' t\bigr) \le \mathbf{P}^{n,M}_{\mathcal{E}_n, \mathcal{F}, \mathcal{F}_d}(t)
\le p_{n,M}(t),
\end{eqnarray*}
where $c,c'>0$ are constants independent of $n,M$ and $t$.
\end{mydef}

The following proposition is a direct consequence of Corollary~\ref
{coruniform} and Theorem~\ref{thmlower-exp}.

\begin{prop}
\label{propoptim}
Let $M \ge2, n\ge1$ and $L>0$.
Let $\mathcal{F}=\{g\in L^2(\mathbf{R}^d), \llVert  g \rrVert
_\infty\le L\}$
and $\mathcal{F}_d$ be the set of all densities $f$ with respect to
the Lebesgue measure on $\mathbf{R}^d$ with $\llVert  f \rrVert _\infty\le L$.
Let $\mathcal{E}_n$ be the set of all estimators.
Define
\begin{eqnarray*}
&& p_{n,M}(t) = M \exp \biggl(- \frac{3 t n}{20 L} \biggr)
\mathbf{1}_{[0,L]}(t),
\end{eqnarray*}
where $\mathbf{1}_A$ denotes the indicator function of the set $A$.
Then for all $t>0$,
\begin{eqnarray*}
&& \tfrac{1}{24} p_{n,M} ( 160 t ) \le \mathbf{P}^{n,M}_{\mathcal{E}_n, \mathcal{F}, \mathcal{F}_d}(t)
\le p_{n,M} ( t ).
\end{eqnarray*}
Thus, $p_{n,M}(\cdot)$
is an optimal tail distribution over $(\mathcal{E}_n, \mathcal{F},
\mathcal{F}_d)$
according to  Definition~\ref{defminimax-optimality}.
\end{prop}

\begin{pf}
The regime $t>L$ corresponds to the trivial case where
(\ref{eqdummy0}) holds and
$\hat T_n^* = 0$ is an optimal estimator.
In this regime\vspace*{1pt} $p_{n,M}(t) = 0$.

For $t\le L$,
by setting $t=\beta\frac{\log(M) + x}{n} = \frac{20L}{3} \frac
{\log(M) + x}{n}$ in Corollary~\ref{coruniform}, we get
\begin{eqnarray*}
&& \mathbf{P}^{n,M}_{\mathcal{E}_n, \mathcal{F}, \mathcal{F}_d} \le p_{n,M} ( t )
\end{eqnarray*}
while Theorem~\ref{thmlower-exp} implies that
\begin{eqnarray*}
&&\frac{1}{24} p_{n,M} \biggl( \frac{24\cdot20}{3} t \biggr) \le
\mathbf{P}^{n,M}_{\mathcal{E}_n, \mathcal{F}, \mathcal{F}_d}(t).
\end{eqnarray*}
\upqed\end{pf}

Similarly, the results of Section~\ref{sselectors} imply the following proposition.
\begin{prop}
Let $M \ge2, n\ge1$ and $L>0$.
Let $\mathcal{F}=\{g\in L^2(\mathbf{R}^d), \llVert  g \rrVert
_\infty\le L\}$
and $\mathcal{F}_d$ be the set of all densities $f$ with respect to
the Lebesgue measure on $\mathbf{R}^d$ with $\llVert  f \rrVert _\infty\le L$.
Let
$\mathcal{S}_n$ be the set of all selectors, that is, the measurable functions
valued in the discrete set $\{f_1,\ldots,f_M\}$.
Define
\begin{eqnarray*}
&& q_{n,M}(t) = M \exp \biggl(-\frac{t^2 n}{L^2 (4\sqrt{2} + 8/3)^2} \biggr)
\mathbf{1}_{[0,L]}(t),
\end{eqnarray*}
where $\mathbf{1}_A$ denotes the indicator function of the set $A$.
Then for all $t>0$,
\begin{eqnarray*}
&& \tfrac{1}{24} q_{n,M} \bigl( \sqrt{3} (4\sqrt{2} + 8/3) t \bigr)
\le \mathbf{P}^{n,M}_{\mathcal{S}_n, \mathcal{F}, \mathcal{F}_d}(t) \le q_{n,M} ( t ).
\end{eqnarray*}
Thus, $q_{n,M}(\cdot)$
is an optimal tail distribution over $(\mathcal{S}_n, \mathcal{F},
\mathcal{F}_d)$
according to Definition~\ref{defminimax-optimality}.
\end{prop}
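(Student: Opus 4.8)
The plan is to transcribe the proof of \Cref{prop:optim}, replacing \Cref{cor:uniform} by \Cref{thm:erm-oi} (which produces a selector) for the upper bound and keeping \Cref{thm:selectors} for the lower bound. Throughout write $K = 4\sqrt 2 + 8/3$, so that $q_{n,M}(t) = M\exp\!\big(-t^2 n/(L^2K^2)\big)\,\mathbf 1_{[0,L]}(t)$ and the asserted constants are $c = 1/24$ and $c' = \sqrt 3\,K$.

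\emph{Upper bound.} The empirical risk minimizer $f_{\hat J}$ of \Cref{thm:erm-oi} is a selector, and that theorem holds for every density $f$, in particular for every $f\in\mathcal F_d$, with $L_0=L$ since $f_j\in\mathcal F$ gives $\inorm{f_j}\le L$; hence $\mathbf P^{n,M}_{\mathcal S_n,\mathcal F,\mathcal F_d}(t)\le \sup_{f\in\mathcal F_d}\proba{R(f_{\hat J})-\min_j R(f_j)>t}$ for every dictionary $\{f_1,\dots,f_M\}\subset\mathcal F$, and this bound does not depend on the dictionary. Fix $t\le L$ (the regime $t>L$ being the degenerate one discussed after \Cref{thm:lower-exp}, where $q_{n,M}(t)=0$) and set $x = t^2 n/(L^2K^2)-\log M$. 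If $x<0$ then $q_{n,M}(t)>1$ and there is nothing to prove. If $x\ge0$, then $(x+\log M)/n = t^2/(L^2K^2)\le 1$, so the fast term of \Cref{thm:erm-oi} is dominated by its slow term, $\tfrac{8(x+\log M)}{3n}\le \tfrac 83\sqrt{(x+\log M)/n}$, and \Cref{thm:erm-oi} gives, with probability at least $1-e^{-x}$,
\[
    R(f_{\hat J})-\min_j R(f_j)\;\le\; L\Big(4\sqrt 2+\tfrac 83\Big)\sqrt{(x+\log M)/n}\;=\;LK\cdot\frac{t}{LK}\;=\;t .
\]
Therefore $\proba{R(f_{\hat J})-\min_j R(f_j)>t}\le e^{-x}=M\exp\!\big(-t^2 n/(L^2K^2)\big)=q_{n,M}(t)$, and taking suprema over $f$ and over dictionaries yields $\mathbf P^{n,M}_{\mathcal S_n,\mathcal F,\mathcal F_d}(t)\le q_{n,M}(t)$.

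\emph{Lower bound.} Since $\norms{\hat S_n-f}-\inf_j\norms{f_j-f}=R(\hat S_n)-\inf_j R(f_j)$, the class $\mathcal F$ of \Cref{thm:selectors} coincides with $\mathcal F_d$ here, and the dictionary it produces satisfies $\inorm{f_j}\le L$ and so lies in $\mathcal F$, I can insert \Cref{thm:selectors} into the definition \eqref{eq:thequantity}. Fix $t>0$; if $\sqrt 3\,Kt> L$ then $q_{n,M}(\sqrt 3\,Kt)=0$ and the bound is trivial, so assume $\sqrt 3\,Kt\le L$, whence $3t^2/L^2<1$. For any $x> x_0:=3t^2 n/L^2-\log M$ with $x\ge 0$ and $x$ close enough to $x_0$ that $(\log M+x)/n<3$, one has $(x+\log M)/n> 3t^2/L^2$, hence $t<\tfrac L{\sqrt 3}\sqrt{(x+\log M)/n}$, so the event appearing in \Cref{thm:selectors} is contained in $\{R(\hat S_n)-\inf_j R(f_j)>t\}$. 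Consequently \Cref{thm:selectors} gives $\mathbf P^{n,M}_{\mathcal S_n,\mathcal F,\mathcal F_d}(t)\ge\tfrac1{24}e^{-x}$, and letting $x\downarrow x_0$ yields $\mathbf P^{n,M}_{\mathcal S_n,\mathcal F,\mathcal F_d}(t)\ge\tfrac1{24}M\exp\!\big(-3t^2 n/L^2\big)=\tfrac1{24}q_{n,M}(\sqrt 3\,Kt)$. Combining the two displays and invoking \Cref{def:minimax-optimality} with $c=1/24$ and $c'=\sqrt 3\,K=\sqrt 3(4\sqrt 2+8/3)$ finishes the proof.

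The argument is essentially a transcription of the proof of \Cref{prop:optim}, and I do not expect a substantive obstacle. The two points that require care are: in the upper bound, absorbing the fast term $\tfrac{8(x+\log M)}{3n}$ of \Cref{thm:erm-oi} into its slow term, which forces the restriction $t\le L$ and is precisely what the indicator $\mathbf 1_{[0,L]}$ in $q_{n,M}$ records; and in the lower bound, the harmless $\ge$-versus-$>$ discrepancy between the event of \Cref{thm:selectors} and the strict inequality in \eqref{eq:thequantity}, which I handle by the limit $x\downarrow x_0$. The only other thing to keep track of is matching the rate $\sqrt{(\log M)/n}$ with its square across the two substitutions.
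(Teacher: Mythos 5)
Your proof is correct and follows essentially the same route as the paper: the upper bound by substituting $t=L(4\sqrt2+8/3)\sqrt{(x+\log M)/n}$ into \Cref{thm:erm-oi} and absorbing the fast term using $t\le L$, and the lower bound by matching $x$ in \Cref{thm:selectors}; you merely spell out details (the choice of $x$, the $x<0$ and $t>L$ degeneracies, the $\ge$ versus $>$ limit) that the paper leaves implicit.
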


\begin{pf}
The regime $t>L$ can be treated similarly as in the proof of Proposition~\ref{propoptim}.

For $t\le L$, let $t=L (4\sqrt{2} + 8/3) \sqrt{\frac{x+\log M}{n}}$
in Theorem~\ref{thmerm-oi}.
For this definition of $t$ and $x$, $1 \ge\sqrt{\frac{x+\log
M}{n}}\ge\frac{x+\log M}{n}$.
Then
\begin{eqnarray*}
&&\mathbf{P}^{n,M}_{\mathcal{S}_n, \mathcal{F}, \mathcal{F}_d}(t) \le q_{n,M} ( t )
\end{eqnarray*}
and Theorem~\ref{thmselectors} implies
\begin{eqnarray*}
&& \tfrac{1}{24} q_{n,M} \bigl( \sqrt{3} (4\sqrt{2} + 8/3) t \bigr)
\le \mathbf{P}^{n,M}_{\mathcal{S}_n, \mathcal{F}, \mathcal{F}_d}(t).
\end{eqnarray*}
\upqed\end{pf}

\section{Proofs}
\label{sproofs}

\subsection{Bias-variance decomposition}

As discussed in Section~\ref{sms}, the penalty can be viewed as the variance
of a random element of the discrete set $\{f_1,\ldots,f_M\}$
and it satisfies the following bias-variance decomposition.
\begin{prop}
\label{propbias-variance}
For any $g\in L^2(\mu)$ and $\theta\in\Lambda^M$,
\begin{equation}\label{eqbias-variance}
\sum_{j=1}^M\theta_j \llVert
f_j - g \rrVert ^2 = \llVert f_\theta- g
\rrVert ^2 + \operatorname{pen}( \theta ),
\end{equation}
where $\operatorname{pen}( \cdot )$ is the penalty defined in (\ref
{eqdef-pen}).
\end{prop}
\begin{pf}
Let $\eta$ be a random variable
with $\mathbb{P} ( \eta= j  ) = \theta_j$ for all
$j=1,\ldots,M$.
Denote by $\mathbb{E}_\theta$ the expectation with respect to
the random variable $\eta$.
Then $\mathbb{E}_\theta[ f_\eta]= f_\theta$ and
the bias-variance decomposition yields
\begin{eqnarray*}
&&\mathbb{E}_\theta\llVert f_\theta- g \rrVert ^2 =
\bigl\llVert g-\mathbb{E}_\theta[ f_\eta] \bigr\rrVert
^2 + \mathbb{E}_\theta\bigl\llVert f_\eta-
\mathbb{E}_\theta[ f_\eta] \bigr\rrVert ^2,
\end{eqnarray*}
which is exactly the desired result.
\end{pf}

\subsection{Concentration inequalities}

\begin{prop}
\label{propbernstein}
Let $Y_1, \ldots, Y_n$ be independent random variables,
such that almost surely, for all $i$, $|Y_i - \mathbb{E}Y_i| \le b$.
Then for all $x > 0$,
\begin{eqnarray} \label{eqbernstein}
&& \mathbb{P} \Biggl( \sum_{i=1}^nY_i
- \mathbb{E}Y_i > \sqrt{ 2 x v } + \frac{bx}{3} \Biggr)  \le
\exp(-x),
\end{eqnarray}
where $v =\sum_{i=1}^n \mathbb{V} ( Y_i  )$.
\end{prop}
Proposition~\ref{propbernstein} is close to Bennett and Bernstein inequalities.
A proof can be found in \cite{massart2007concentration}, Section~2.2.3, (2.20) with
$c=b/3$.

The following one-sided concentration inequality is a direct consequence
of Proposition~\ref{propbernstein}
and the inequality $2\sqrt{uv} \le\frac{u}{a} + av$ for all $a,u,v>0$.
Under the same assumptions as Proposition~\ref{propbernstein} above,
for all $x > 0$ and any $a>0$,
\begin{eqnarray}\label{eqbernstein2}
&& \mathbb{P} \Biggl( \frac{1}{n}\sum_{i=1}^nY_i
- \mathbb{E}Y_i - a \mathbb{V} ( Y_i ) > \biggl(
\frac{1}{2a} + \frac{b}{3} \biggr)\frac{x}{n} \Biggr)  \le
\exp(-x).
\end{eqnarray}

\begin{prop}
\label{propstochastic-term}
Let $X_1,\ldots,X_n$ be i.i.d. observations drawn from the density $f$
with $\llVert  f \rrVert _\infty\le L$.
Let $g\in L^2(\mu)$ with $\llVert  g \rrVert _\infty\le4L_0$.
Let $\beta= 4L + \frac{8 L_0}{3}$.
Define
\begin{eqnarray*}
&& \zeta_n = (P-P_n)g - \frac{1}{8} \llVert g
\rrVert ^2 - \frac{\beta}{n} \log \frac{1}{\pi},
\end{eqnarray*}
where the notation $P$ and $P_n$ is defined in (\ref{eqPPn1}).
Then for all $x > 0$,
\begin{eqnarray*}
&&\mathbb{P} \biggl( \zeta_n > \frac{\beta x}{n} \biggr) \le\pi
\exp(-x).
\end{eqnarray*}
\end{prop}

\begin{pf}
As the unknown density $f$ is bounded by $L$,
\begin{eqnarray*}
\mathbb{V} \bigl( g(X_1) \bigr) & \le&  P\bigl(g^2\bigr)
= \int g^2 f \,d\mu\le L \llVert g \rrVert ^2,
\\
-\frac{1}{8} \llVert g \rrVert ^2 & \le& - \frac{1}{8L}
\mathbb{V} \bigl( g(X_1) \bigr).
\end{eqnarray*}
Thus, almost surely
\begin{eqnarray*}
&&\zeta_n \le (P-P_n)g -\frac{1}{8L} \mathbb{V}
\bigl( g(X_1) \bigr) - \frac{\beta}{n} \log\frac{1}{\pi}.
\end{eqnarray*}
Define $n$ i.i.d. random variables $Y_{1},\ldots,Y_{n}$ by
\begin{eqnarray*}
&& Y_{i} = g(X_i).
\end{eqnarray*}
Almost\vspace*{1pt} surely, $| Y_{i} | \le4 L_0$ and $| Y_{i} - \mathbb{E}Y_{i}|
\le8L_0$.
By applying \eqref{eqbernstein2} to $Y_{1},\ldots, Y_{n}$
with $b= 8L_0$ and $a=\frac{1}{8L}$,
we get that for any $t>0$ with $x=t+\log\frac{1}{\pi}$,
\begin{eqnarray*}
\mathbb{P} \biggl( (P-P_n)g - \frac{1}{8L} \mathbb{V} \bigl(
g(X_1) \bigr) > \frac{\beta x}{n} \biggr) &\le & \exp(-x)
\\
\mathbb{P} \biggl( \zeta_n > \frac{\beta x}{n} \biggr) \le\mathbb
{P} \biggl( (P-P_n)g - \frac {1}{8L} \mathbb{V} \bigl(
g(X_1) \bigr) - \frac{\beta}{n} \log\frac{1}{\pi} >
\frac {\beta t}{n} \biggr) &\le & \pi\exp(-t).
\end{eqnarray*}
\upqed\end{pf}

\subsection{Strong convexity}
\label{sproofs-H}

\begin{pf*}{Proof of Proposition~\protect\ref{propH}}
We will first prove that for any $\theta, \theta'$,
\begin{equation}
\label{eqstrong-convex-H} H_n(\theta) - H_n\bigl(
\theta'\bigr) = \bigl\langle\nabla H_n\bigl(
\theta'\bigr), \theta- \theta' \bigr\rangle +
\tfrac{1}{2} \llVert f_\theta- f_{\theta'} \rrVert
^2.
\end{equation}
Using the bias-variance decomposition of (\ref{eqbias-variance})
with $g=0$,
we get
\begin{eqnarray*}
&& \operatorname{pen}( \theta ) = \sum_{j=1}^M
\theta_j \llVert f_\theta- f_j \rrVert
^2 = - \llVert f_\theta \rrVert ^2 +\sum
_{j=1}^M\theta_j \llVert
f_j \rrVert ^2 .
\end{eqnarray*}
Thus, $H_n$ can be rewritten
as $H_n(\theta) = \frac{1}{2} \llVert  f_\theta \rrVert ^2 +
L(\theta)$
where $L$ is affine in $\theta$.
If we can prove $N(\theta) - N(\theta') = \langle\nabla N(\theta'),
\theta- \theta'\rangle+ \llVert  f_\theta- f_{\theta'} \rrVert ^2$
where $N(\theta) = \llVert  f_\theta \rrVert ^2$, then (\ref
{eqstrong-convex-H}) holds.
By simple properties of the norm,
\begin{eqnarray*}
\llVert f_\theta \rrVert ^2 - \llVert f_{\theta'}
\rrVert ^2 & =&  2 \int f_{\theta'} ( f_\theta-
f_{\theta'})\,d\mu+ \llVert f_\theta- f_{\theta'} \rrVert
^2
\\
& = & 2 {\theta'}^T G \bigl(\theta-
\theta'\bigr) + \llVert f_\theta- f_{\theta'} \rrVert
^2,
\end{eqnarray*}
where $G$ is the Gram matrix with elements $G_{j,k} =\int f_j f_k \,d\mu
$ for all $j,k=1,\ldots,M$.
The gradient at $\theta'$ of the function $\theta\rightarrow\llVert  f_\theta \rrVert ^2$
is exactly $2G\theta'$ so (\ref{eqstrong-convex-H}) holds.

The function $H_n$ is convex
and differentiable.
If ${\hat\theta}$ minimizes $H_n$ over the simplex,
then for any $\theta\in\Lambda^M$, $\langle\nabla H_n({\hat\theta}),
\theta- {\hat\theta}\rangle\ge0$ which proves (\ref{eqthat-H}).
\end{pf*}

\subsection{Tools for lower bounds}

\begin{prop}
\label{proprademacher}
There exists a countable set of functions $\varepsilon_1, \varepsilon_2,
\ldots$ defined on $[0,1]$ such that
for all $j,k>0$ with $k\ne j$,
\begin{eqnarray*}
&& \forall u\in[0,1),\qquad \varepsilon_j(u)  \in\{-1,1\},
\\
&&\int_{[0,1]} \varepsilon_j(x) \varepsilon
_k(x) \,dx  = 0,
\\
&& \int_{[0,1]} \varepsilon_j^2(x)
\,dx  = 1.
\end{eqnarray*}
Furthermore, if $U$ is uniformly distributed on $[0,1]$,
then $\varepsilon_1(U), \varepsilon_2(U),\ldots$ are i.i.d. Rademacher random
variables.
\end{prop}
See \cite{heil2011basis}, Definition~3.22, for an explicit construction
of these functions and a proof a their properties.

If $P\ll Q$ are two probability measures
defined on some measurable space, define their Kullback--Leibler
divergence and their $\chi_2$ divergence by
\begin{eqnarray*}
&& K(P,Q) = \int\log \biggl(\frac{dP}{dQ} \biggr) \,dP,\qquad
\chi_2(P,Q) = \int \biggl(\frac{dP}{dQ} - 1 \biggr)^2
\,dQ.
\end{eqnarray*}
The following comparison holds
\begin{eqnarray}\label{eqkullbackcompare}
&& K(P,Q)  \le\chi_2(P, Q).
\end{eqnarray}
Furthermore, if $n\ge1$ and $P^{\otimes n}$ denotes the $n$-product of
measures $P$,
\begin{eqnarray} \label{eqkullbackproduct}
&& K\bigl(P^{\otimes n},Q^{\otimes n}\bigr)  = n K(P, Q).
\end{eqnarray}
The proofs of \eqref{eqkullbackcompare} and \eqref{eqkullbackproduct}
are given in
\cite{tsybakov2009introduction}, Lemma~2.7 and page 85.

\begin{lemma}
\label{lemmaminimax}
Let $(\Omega, \mathcal{A})$ be a measurable space and $m\ge1$.
Let $m\ge1$ and $A_0,\ldots,A_m \in\mathcal{A}$ be disjoint events:
$A_j \cap A_k = \varnothing$ for any $j\ne k$.
Assume that $Q_0, \ldots, Q_m$ are probability measures on
$(\Omega, \mathcal{A})$ such that
\begin{eqnarray*}
&& \frac{1}{m} \sum_{j=1}^m
K(Q_j, Q_0) \le\chi< \infty.
\end{eqnarray*}
Then,
\begin{eqnarray*}
&& \max_{j=0,\ldots,m} Q_j(\Omega\setminus A_j)
\ge\frac{1}{12} \min \bigl(1, m\exp(- 3 \chi)\bigr).
\end{eqnarray*}
\end{lemma}
Lemma~\ref{lemmaminimax} can be found in \cite{kerkyacharian2013optimal}, Lemma~3.
It is a direct consequence of
\cite{tsybakov2009introduction}, Proposition~2.3, with $\tau^* = \min(m^{-1}, \exp(-3\chi))$.

\begin{cor}[(Minimax lower bounds)]
\label{corminimax}
Let $n\ge1$ be an integer and
$s>0$ be a positive number.
Let $m\ge1$ and $q_0,\ldots,q_m$ be a family of densities with respect
to the same measure $\mu$.
Assume that for any $j\ne k$,
\begin{eqnarray}
\label{eqdisjoint}
\Vert q_j - q_k \Vert^2
\ge4s > 0.
\end{eqnarray}
If $P_k^{\otimes n}$ denotes the product measure associated
with $n$ i.i.d. observations drawn from the density $q_k$,
assume that
\begin{eqnarray*}
&&\frac{1}{m} \sum_{j=1}^m K
\bigl(P_j^{\otimes n}, P_0^{\otimes n}\bigr) \le
\chi
\end{eqnarray*}
for some finite $\chi>0$.
Then, for any estimator $\hat T_n$,
\begin{eqnarray*}
&&\max_{k=0,\ldots,m} \mathbb{P}_k^{\otimes n} \bigl(
\Vert\hat T_n - q_k\Vert^2 \ge s \bigr) \ge
\frac{1}{12}\min\bigl(1,m\exp(-3\chi)\bigr).
\end{eqnarray*}
\end{cor}
\begin{pf}
For any estimator $\hat T_n$, for any $j=0,\ldots,m$ define
the events
\begin{eqnarray*}
&& A_j = \bigl\{ \llVert \hat T_n - q_j
\rrVert ^2 < s \bigr\}.
\end{eqnarray*}
These events are disjoint because of the triangle inequality
and (\ref{eqdisjoint}).
Applying Lemma~\ref{lemmaminimax} completes the proof.
\end{pf}

\subsection{Lower bound theorems}

\subsubsection{Lower bounds with exponential tails}

\begin{pf*}{Proof of Theorem~\protect\ref{thmlower-exp}}
Let $\varepsilon_2, \ldots, \varepsilon_M$ be $M-1$ functions
from Proposition~\ref{proprademacher}.
Consider the dictionary
$\{f_1,\ldots,f_{M}\}$ such that for all $(u_1,\ldots,u_d)\in\mathbf{R}^d$
\begin{eqnarray*}
&& f_1(u_1,\ldots,u_d) = \frac{L}{2}
\mathbf{1}_{[0,1]} \biggl(\frac
{L}{2} u_1 \biggr) \prod
_{q=2}^d \mathbf{1}_{[0,1]}(u_q),
\end{eqnarray*}
and for $j\ge2$
\begin{eqnarray*}
&& f_j(u_1,\ldots,u_d) = \frac{L}{2}
\biggl(1 + \sqrt{\frac{\log(M) +
x}{3n}} \varepsilon_j \biggl(
\frac{L}{2} u_1 \biggr) \biggr) \mathbf {1}_{[0,1]}
\biggl(\frac{L}{2} u_1 \biggr) \prod
_{q=2}^d \mathbf{1}_{[0,1]}(u_q).
\end{eqnarray*}
Since $\frac{\log M +x}{n} < 3$, these functions are densities and
satisfy $\llVert  f_j \rrVert _\infty < L$.

For any $j\ne k$,
\begin{eqnarray}
\label{eqlower-dist}
&& \llVert f_j - f_k \rrVert
^2 \ge L \frac{\log(M) + x}{6n}
\end{eqnarray}
and (\ref{eqlower-dist}) is true with equality when $j=1$.
If $P_k^{\otimes n}$ denotes the probability with respect to $n$ i.i.d.
random variables with density $f_j$, the properties \eqref
{eqkullbackcompare} and \eqref{eqkullbackproduct} give that for any
$k\ge2$,
\begin{eqnarray*}
K\bigl(P_k^{\otimes n},P_1^{\otimes n}\bigr) & =&
n K\bigl(P_k^{\otimes1},P_1^{\otimes1}\bigr)
\\
& \le & n \chi_2\bigl(P_k^{\otimes1},P_1^{\otimes1}
\bigr)
\\
& =&  n \frac{2}{L} \llVert f_k - f_1 \rrVert
^2
\\
& =& \frac{\log(M) + x}{3}.
\end{eqnarray*}
Applying Corollary~\ref{corminimax} with $m=M-1$ yields that for any estimator
$\hat T_n$,
\begin{eqnarray*}
\sup_{j=1,\ldots,M} P_j^{\otimes n} \biggl( \llVert
\hat T_n - f_j \rrVert ^2 \ge L
\frac{\log(M) + x}{24n} \biggr) & \ge & \frac{1}{12} \min\biggl(1,
\frac{M-1}{M} \exp(-x)\biggr)
\\
& \ge & \frac{1}{24} \exp(-x).
\end{eqnarray*}
\upqed\end{pf*}

\begin{pf*}{Proof of Theorem~\ref{thmselectors}}
\label{pselectors}
Let $\varepsilon_1,\ldots,\varepsilon_M$ be $M$ functions from Proposition~\ref
{proprademacher}.

For $(u_1,\ldots,u_d)\in\mathbf{R}^d$, we define a dictionary $\{
f_1,\ldots,f_M\}$ by
\begin{eqnarray*}
&& f_j(u_1, \ldots, u_d) = \frac{L}{2}
\biggl(1 + \varepsilon_j \biggl(\frac
{L}{2} u_1
\biggr) \biggr) \mathbf{1}_{[0,1]} \biggl(\frac{L}{2}
u_1 \biggr) \prod_{q=2}^d
\mathbf{1}_{[0,1]}(u_q),
\end{eqnarray*}
and we define $M$ densities $\{d_1,\ldots,d_M\}$:
\begin{eqnarray*}
&& d_j(u_1,\ldots,u_d) = \frac{L}{2}
\biggl(1 + \gamma\varepsilon_j \biggl(\frac{L}{2}
u_1 \biggr) \biggr) \mathbf{1}_{[0,1]} \biggl(
\frac
{L}{2} u_1 \biggr) \prod_{q=2}^d
\mathbf{1}_{[0,1]}(u_q),
\end{eqnarray*}
for some $\gamma\in(0,\frac{1}{2})$ that will be specified later.
Due to the properties of the $(\varepsilon_j)$, the following holds
for any $j\ne k$
\begin{eqnarray*}
\Vert f_k - d_j \Vert^2 & =&
\frac{L}{2}\bigl(1 + \gamma^2\bigr),
\\
\Vert f_j - d_j \Vert^2 & =&
\frac{L}{2}(1-\gamma)^2,
\\
\Vert d_j - d_k \Vert^2 & =&  L
\gamma^2.
\end{eqnarray*}
Thus if $\hat S_n$ is any selector taking values in the discrete set
$\{f_1,\ldots,f_M\}$:
\begin{equation}\label{eqexcessdj}
\Vert\hat S_n - d_j \Vert^2 - \inf
_{l=1,\ldots,M}\Vert f_l -d_j\Vert
^2 = \Vert\hat S_n - d_j \Vert^2
- \Vert f_j -d_j\Vert^2 = 2 L \gamma
\mathbf{1}_{\hat S_n\ne f_j}.
\end{equation}
Let $P_k^{\otimes n}$ be the product measure associated with $n$ i.i.d.
random variables drawn from the density~$d_k$.
Equation (\ref{eqexcessdj}) ensures that with probability $\mathbb
{P}_j^{\otimes n}(\hat S_n\ne f_j)$, the excess risk is $2 L\gamma$.

For any $k\ne1$, using \eqref{eqkullbackcompare} and \eqref
{eqkullbackproduct}, we obtain
\begin{eqnarray*}
K\bigl(P_k^{\otimes n},P_1^{\otimes n}\bigr) & =&
n K\bigl(P_k^{\otimes1},P_1^{\otimes1}\bigr)
\\
& \le& n \chi_2\bigl(P_k^{\otimes1},P_1^{\otimes1}
\bigr)
\\
& \le &\frac{4}{L} n \llVert d_k - d_1 \rrVert
^2
\\
& =&  4 n \gamma^2,
\end{eqnarray*}
where we used that $d_1(u_1,\ldots,u_d) \ge L/4$ almost surely on the
common support of $d_k$ and $d_1$.

Now we choose $\gamma= \frac{1}{2\sqrt{3}}\sqrt{\frac{x+\log
M}{n}}$ such that
$\forall k\ne1,K(P_k^{\otimes n},P_1^{\otimes n}) \le\frac{x + \log M}{3}$.
Let $\hat S_n$ be any estimator with values in the discrete set $\{
f_1,\ldots,f_M\}$.
For any $j=1,\ldots,M$, define the event $A_j = \{ \hat S_n = f_j \}$.
The events are disjoint if $f_j \ne f_k$ for all $j\ne k$ (if this is
not satisfied,
we can always remove the duplicates).
By applying Lemma~\ref{lemmaminimax} with $m = M-1$ and $\chi= \frac
{1}{3}(x+ \log M)$, we get
\begin{eqnarray*}
&&\max_{j=1,\ldots,M} \mathbb{P}_j^{\otimes n} ( \hat
S_n \ne f_j ) \ge\frac{M-1}{12M} \exp(-x).
\end{eqnarray*}
Since $(M-1)/M \ge1/2$,
\begin{eqnarray*}
\max_{j=1,\ldots,M} \mathbb{P}_j^{\otimes n} \Bigl(
\Vert\hat S_n - d_j \Vert^2 - \inf
_{l=1,\ldots,M}\Vert f_l -d_j
\Vert^2 > 2 L \gamma \Bigr) & \ge & \frac{M-1}{12M} \exp(-x)
\\
& \ge& \frac{1}{24} \exp(-x).
\end{eqnarray*}
\upqed\end{pf*}

\subsubsection{ERM over the convex hull}
\label{sproofhull}

\begin{pf*}{Proof of Proposition~\protect\ref{properm-hull}}
By homogeneity, it is enough to prove the case $L=2$.
Let $\phi_1,\ldots,\phi_M, \phi_{M+1}$ be $M+1$ functions
given by Proposition~\ref{proprademacher}.
Consider the probability density $f = \mathbf{1}_{[0,1]}$ and the
dictionary of $2M+1$ functions
\begin{eqnarray*}
&&\mathcal{D} = \{\mathbf{1}_{[0,1]} \} \cup \bigl\{(1 \pm
\phi_j \phi_{M+1})\mathbf{1}_{[0,1]}, j=1,\ldots,M
\bigr\}.
\end{eqnarray*}
The true density is in the dictionary thus $\min_{g\in\mathcal{D}}
\llVert  f - g \rrVert ^2 = 0$.
Also, all the elements of the dictionary are uniformly bounded by $L=2$.

The convex hull of the dictionary is the set
\begin{eqnarray*}
&& \bigl\{ g_\lambda= (1 + f_\lambda\phi_{M+1})
\mathbf{1}_{[0,1]},  \lambda\in\mathbf{R}^M, |
\lambda|_1 \le1 \bigr\},
\end{eqnarray*}
where
$f_\lambda= \sum_{j=1}^M \lambda_j \phi_j$ and $|\cdot|_1$ is the
$\ell_1$ norm in $\mathbf{R}^M$.

For all $\lambda\in\mathbf{R}^M$ with $|\lambda|_1 \le1$,
$\llVert  f - g_\lambda \rrVert ^2 = |\lambda|^2_2$
where $|\cdot|_2$ is the $\ell_2$ norm in $\mathbf{R}^M$.

Let $\mathcal{L}_\lambda:=\llVert  g_\lambda \rrVert
^2 - 2 g_\lambda+
2f - \llVert  f \rrVert ^2 = |\lambda|_2^2 - 2 f_\lambda\phi_{M+1}$.
Since the empirical process is linear in $\lambda$,
the proof from \cite{lecue2009aggregation} can be adapted as follows.
Given $n$ i.i.d. observations $X_1,\ldots,X_n$ generated by the density $f$,
\cite{lecue2009aggregation}, Lemma~5.4,
states that for every $r>0$, with probability greater than $1-6\exp
(-C_2 M)$,
\begin{eqnarray*}
&& c_0 \sqrt{\frac{r}{M}} \le c_1 \sqrt{
\frac{rM}{n}} \le \sup_{\lambda\in\mathbf{R}^M, |\lambda|_2 \le\sqrt{r} } P_n(f_\lambda
\phi_{M+1}) \le c_2 \sqrt{\frac{rM}{n}} \le
c_3 \sqrt{\frac{r}{M}},
\end{eqnarray*}
where $c_0,c_1, c_2, c_3>0$ are absolute constants.

Let $r\le1/M$ that will be specified later (such that if $|\lambda|_2
\le\sqrt{r}$ then $|\lambda|_1 \le1$).
On the one hand,
\begin{eqnarray*}
&& \inf_{\lambda\in\mathbf{R}^M, |\lambda|_2 \le\sqrt{r} } P_n \mathcal {L}_\lambda \le r
- 2 \sup_{\lambda\in\mathbf{R}^M, |\lambda|_2\le\sqrt{r}} P_n( f_\lambda
\phi_{M+1}).
\end{eqnarray*}
Given that $n \sim M^2$, using the above high probability estimate,
there exists a positive absolute constant $c_4$ such that
for all $r\le c_3^2/ (4M)$,
with probability greater than $1-6\exp(-C_2 M)$,
$
\inf_{\lambda\in\mathbf{R}^M, |\lambda|_2 \le\sqrt{r} } P_n
\mathcal
{L}_\lambda
\le
\sqrt{r}(\sqrt{r} - c_3/\sqrt{M})
\le
- c_4 \sqrt{r/M}
$, where\vspace*{1pt} $c_4=c_3/2$.

On the other hand, if $\rho\le1/M$, with probability greater than
$1-6\exp(-C_2 M)$,
\begin{eqnarray*}
&&\sup_{\lambda\in\mathbf{R}^M, |\lambda|_2 \le\sqrt{\rho} } \bigl| (P_n-P) \mathcal{L}_\lambda\bigr|
= 2 \sup_{\lambda\in\mathbf{R}^M, |\lambda|_2 \le\sqrt{\rho} } \bigl| (P_n-P) f_\lambda
\phi_{M+1} \bigr| \le 2 c_3 \sqrt{\frac{\rho}{M}}.
\end{eqnarray*}
Finally, choose $r,\rho$ such that $2 c_3 \sqrt{\rho/M} < c_4 \sqrt
{r/M}$ and $\rho> c_5/\sqrt{n}$ for some absolute constant $c_5>0$, then
with probability greater than $1-12\exp(-C_2 M)$,
\begin{eqnarray*}
&&\inf_{\lambda, |\lambda|_2\le\sqrt{\rho}} P_n\mathcal{L}_\lambda \ge -
\sup_{\lambda, |\lambda|_2 \le\sqrt{\rho} } \bigl| (P_n-P) \mathcal {L}_\lambda\bigr|
\ge - 2 c_3 \sqrt{\frac{\rho}{M}} > - c_4 \sqrt{
\frac{r}{M}} \ge \inf_{\lambda, |\lambda|_2\le\sqrt{r}} P_n
\mathcal{L}_\lambda.
\end{eqnarray*}
Thus with high probability,
$\inf_{\lambda, |\lambda|_2\le\sqrt{\rho}} P_n\mathcal
{L}_\lambda
>
\inf_{\lambda, |\lambda|_1\le1} P_n\mathcal{L}_\lambda$.
The inequality is strict so
the empirical risk minimizer has a risk greater than $\rho$.
As $\rho$ satisfies $\rho> C_3/\sqrt{n}$,
the proof is complete.
\end{pf*}

\subsubsection{Exponential weights}
\label{sproofew}

If $Y_1,\ldots,Y_m$ are i.i.d. with $\mathbb{P} ( Y_1= \pm1
 ) = 1/2$,
then for all $u\in[0,\sqrt{m}/4]$,
\begin{equation}\label{eqmatousek}
\tfrac{1}{15} \exp\bigl(-4 u^2\bigr) \le \mathbb{P} (
Y_1+ \cdots + Y_m \ge u \sqrt{m} ) \le \exp
\bigl(-u^2 / 2\bigr).
\end{equation}
A proof of the lower bound can be found in
\cite{matousek2008probabilistic}, Proposition~7.3.2,
and a standard Chernoff bound provides the upper bound.
The following proof uses arguments similar to \cite{dai2012deviation}.

\begin{pf*}{Proof of Proposition~\protect\ref{propew}}
By homogeneity, it is enough to prove the case $L=1$.
Let $\varepsilon_1,\varepsilon_2, \varepsilon_3$ be $3$ functions from Proposition~\ref
{proprademacher}.
Let $f=\mathbf{1}_{[0,1]}$ be the unknown density and let
\begin{eqnarray*}
f_1  &=&  f + \varepsilon_1, \qquad f_2 = f +
\biggl(1+\frac{1}{\sqrt n}\biggr) \varepsilon_2, \qquad f_3 =
f_2 + \frac{\alpha}{\sqrt n} \varepsilon_3,
\\
\pi_1 &=& 1/(8\sqrt{n}),  \qquad \pi_2 = 1/(8\sqrt{n}),\qquad
  \pi_3 = 1 - 1/(4\sqrt{n}),
\end{eqnarray*}
where $0 \le\alpha\le n^{1/4}$ will be specified later.
The best function in the dictionary is $f_1$: $\llVert  f_1-f \rrVert ^2 = \min_{j=1,\ldots,M}\llVert  f_j-f \rrVert ^2$.

Let $E$ be the event $\{R_n(f_2) + 2/\sqrt{n} \le R_n(f_1)\}$.
By simple algebra,
\begin{eqnarray*}
&& E = \bigl\{1 + 4\sqrt n - 2 \sqrt{n} P_n(\varepsilon_2)
\le2 n  P_n( \varepsilon_2 - \varepsilon_1) \bigr
\} \supseteq \bigl\{7 \sqrt{n} \le2 n  P_n( \varepsilon_2 -
\varepsilon_1) \bigr\},
\end{eqnarray*}
where for the inclusion we used $1\le\sqrt{n}$ and $|P_n(\varepsilon
_2)| \le1$.
The $2n$ random variables $(\varepsilon_j(X_i))_{j=1,2;  i=1,\ldots,n}$ are
i.i.d. Rademacher random variables, so applying the lower bound of~\eqref{eqmatousek} with $m=2n$ and $u=7\sqrt{2}/4$ yields $\mathbb
{P} ( E  ) \ge C_2 >0$ for some absolute constant $C_2$.
Now set
$\alpha^2 = 8 \log(2 n / C_2)$,
and choose $N_0$ such that for all $n\ge N_0$, $8 \log(2 n / C_2)> 0$ and
$\alpha^2 \le\sqrt{n}$.

Let $F :=\{ R_n(f_3) \le R_n(f_1) \}$ and define
\begin{eqnarray*}
&& G = \bigl\{ 2 (\alpha/\sqrt{n}) P_n(\varepsilon_3) \le
\alpha^2 /n - 2/\sqrt{n} \bigr\}.
\end{eqnarray*}
Since $R_n(f_3) = R_n(f_2) + \alpha^2 /n - 2 (\alpha/\sqrt{n})
P_n(\varepsilon_3)$,
we have
$E \cap G^c \subseteq F$.
As $\alpha^2 \le\sqrt{n}$ holds, we have
$\alpha^2 - 2 \sqrt{n} \le- \alpha^2$ and
\begin{eqnarray*}
&& G \subseteq \bigl\{ (2\alpha/n) P_n(\varepsilon_3)
\le- \alpha^2 / n \bigr\} = \bigl\{ - n P_n(
\varepsilon_3) \ge\sqrt{n} \alpha/ 2 \bigr\}.
\end{eqnarray*}
The random variable $ - n P_n(\varepsilon_j)$
is the sum of $n$ independent Rademacher random variables.
Applying the upper bound of \eqref{eqmatousek}
to $u=\alpha/2$, we have $\mathbb{P} ( G  ) \le\exp
(-\alpha^2 / 8) = C_2/(2n)$
since $\alpha= 8 \log(2 n / C_2)$.
Now as $F^c \subset E^c \cup G$,
\begin{eqnarray*}
&&\mathbb{P} \bigl( E^c \cup F^c \bigr) \le \mathbb{P}
\bigl( E^c \cup G \bigr) \le (1-C_2) + \frac{C_2}{2n}
\le1 -C_2/2 < 1.
\end{eqnarray*}
The probability of the event $E \cap F$ is greater than $C_0:= C_2/2$.
On this event,
$R_n(f_2)\le R_n(f_1)$ and
$R_n(f_3)\le R_n(f_1)$
thus
\begin{eqnarray*}
\hat\theta^{\mathrm{EW}, \beta}_1 & =& \frac{\pi_1 \exp(-R_n(f_1)/\beta)}{\pi_1 \exp(-R_n(f_1)/\beta
) + \pi_2 \exp(-R_n(f_2)/\beta) + \pi_3 \exp(-R_n(f_3)/\beta)}
\\
& \le & \frac{\pi_1 \exp(-R_n(f_1)/\beta)}{(\pi_1 + \pi_2 + \pi_3) \exp
(-R_n(f_1)/\beta)} = \pi_1 = \frac{1}{8\sqrt{n}} .
\end{eqnarray*}
Let $\theta_1 = \hat\theta^{\mathrm{EW}, \beta}_1$ for simplicity.
As $(\varepsilon_1,\varepsilon_2, \varepsilon_3)$ is an orthonormal system,
\begin{eqnarray*}
\llVert f_{\hat\theta^{\mathrm{EW}, \beta}} - f \rrVert ^2 - \llVert f_1
- f \rrVert ^2 & \ge& \bigl\llVert \theta_1
f_1 + (1-\theta_1)f_2 - f \bigr\rrVert
^2 - \llVert f_1 - f \rrVert ^2
\\
& =& (1-\theta_1)^2 \llVert f_2-f \rrVert
^2 - \bigl(1-\theta _1^2\bigr)\llVert
f_1-f \rrVert ^2
\\
& \ge & 2(1-\theta_1)^2 / \sqrt{n} + \bigl[(1-
\theta_1)^2 - \bigl(1-\theta _1^2
\bigr)\bigr]
\\
& \ge & 1 /(2\sqrt{n}) - 2 \theta_1
\\
& \ge & 1 /(2\sqrt{n}) - 2/(8\sqrt{n}) \ge1/ (4\sqrt{n}).
\end{eqnarray*}
\upqed\end{pf*}

The proof of Proposition~\ref{propew2} is based on estimates from \cite
{lecue2013optimality}
and highlights the similarities between regression with random design
and density estimation with the $L^2$ risk.

\begin{pf*}{Proof of Proposition~\protect\ref{propew2}}
By homogeneity, it is enough to prove the case $L=1$.
The strategy is to construct an example for density estimation
such that the calculations from \cite{lecue2013optimality}, Proof of Theorem~A,
can be leveraged.
Let $f_Y$ be the probability density
\begin{eqnarray*}
&& f_Y(x) = \cases{ 1/4 + 1/(2\sqrt{n}), & \quad$ \mbox{if } x\in
[-2,0)$, \vspace*{3pt}
\cr
1/4 - 1/(2\sqrt{n}),&\quad $ \mbox{if } x\in(0,2
]$,}
\end{eqnarray*}
and $0$ elsewhere.
Let $\{f_1 = \frac{1}{2} \mathbf{1}_{[-2,0)}, f_2 = \frac{1}{2}
\mathbf{1}_{[0,2)} \}$
be the dictionary.
Let
\begin{eqnarray*}
&& \mathcal{L}_2(y) :=\llVert f_2 \rrVert ^2 -
2 f_2(y) + 2 f_1(y) - \llVert f_1 \rrVert
^2,  \qquad\forall y\in\mathbf{R},
\end{eqnarray*}
and observe that $\mathcal{L}_2(Y) = -X$ where $X= \mathbf
{1}_{[0,2)}(Y) - \mathbf{1}_{[-2,0)}(Y)$
so that $X$ satisfies
\begin{eqnarray*}
&& X = %
\cases{ 1, &  \quad$\mbox{with probability } 1/2 - 1/\sqrt{n}$,
\vspace*{3pt}
\cr
-1, & \quad$\mbox{with probability } 1/2 + 1/\sqrt{n}$.}
\end{eqnarray*}
By definition of $\mathcal{L}_2$,
\begin{eqnarray*}
&& P \mathcal{L}_2 = \mathbb{E}\mathcal{L}_2(Y) = \llVert
f_2 - f_Y \rrVert ^2 - \llVert
f_1 - f_Y \rrVert ^2.
\end{eqnarray*}
As $P \mathcal{L}_2 = \mathbb{E}[-X] = 2/\sqrt{n} > 0$, $f_1$ is the best
function in the dictionary
and $P\mathcal{L}_2$ is the excess risk of $f_2$.
Finally, let
\begin{eqnarray*}
&&\alpha= \frac{\llVert  f_1 - f_2 \rrVert ^2}{P\mathcal{L}_2} = \frac{\sqrt{n}}{2}.
\end{eqnarray*}

For any $\theta\in[0,1]$, let $f_\theta= \theta f_1 + (1-\theta) f_2$.
An explicit calculation of the excess risk of $f_\theta$ yields
\begin{eqnarray*}
\llVert f_\theta- f_Y \rrVert ^2 - \llVert
f_1 - f_Y \rrVert ^2 & =&
\theta^2 \llVert f_1 \rrVert ^2 + (1-
\theta)^2 \llVert f_2 \rrVert ^2 - 2
\mathbb{E} \bigl[f_\theta(Y)\bigr] + 2 \mathbb{E}\bigl[f_1(Y)
\bigr] - \llVert f_1 \rrVert ^2
\\
& =& - \theta(1-\theta) \llVert f_1 - f_2 \rrVert
^2 + (1-\theta) \mathbb{E}[-X]
\\
& = &\bigl(1 - \theta- \theta(1-\theta) \alpha\bigr) P\mathcal{L}_2.
\end{eqnarray*}
Given $n$ independent observations $Y_1,\ldots,Y_n$ with common density $f$,
define $X_i= \mathbf{1}_{[0,2)}(Y_i) - \mathbf{1}_{[-2,0)}(Y_i)$ as above.
The exponential weights estimator with temperature $\beta$ can be
written as
\begin{eqnarray*}
&&\hat f^{\mathrm{EW}}_\beta= \hat\theta_1
f_1 + (1-\hat\theta_1) f_2, \qquad  \hat
\theta_1 :=\frac{1}{1 + \exp(- (n/\beta) ({1}/{n})
\sum_{i=1}^n [- X_i])},
\end{eqnarray*}
and its excess risk is $\llVert \hat f^{\mathrm{EW}}_\beta- f_Y \rrVert ^2 - \llVert  f_1 - f_Y \rrVert ^2 = (1 - \hat\theta_1
- \hat\theta_1(1-\hat\theta_1)\alpha)
P \mathcal{L}_2$.

The constants $\alpha$ and $P \mathcal{L}_2$,
the law of $X_1,\ldots,X_n,\hat\theta_1$
are the same as in \cite{lecue2013optimality}, Proof of Theorem~A,
thus the lower bounds in expectation and probability of the quantity
$(1 - \hat\theta_1 - \hat\theta_1(1-\hat\theta_1)\alpha)$ in
Lecu{\'e} and Mendelson
\cite{lecue2013optimality}
also hold here and yield the lower bound of Proposition~\ref{propew2}.
\end{pf*}

\section*{Acknowledgement}
This work was supported by GENES and by the grant Investissements d'Avenir
(ANR-11-IDEX-0003/Labex Ecodec/ANR-11-LABX-0047).

%





\printhistory
\end{document}